\newcommand{\bb}[1]{\mathbb{#1}}
\newcommand{\cc}[1]{\mathcal{#1}}
\numberwithin{equation}{section}
\newtheorem{theorem}{Theorem}
\newtheorem{corollary}[theorem]{Corollary}
\newtheorem{lemma}[theorem]{Lemma}
\newtheorem{prop}[theorem]{Proposition}
\numberwithin{theorem}{section}
\newenvironment{sproof}{%
  \proof}{\endproof}
\begin{document}

\title{The Hodge Star Operator and the Beltrami Equation}
\author{Eden Prywes}
\maketitle
\begin{abstract}
An essentially unique homeomorphic solution to the Beltrami equation was found in the 1960s using the theory of Calder\'{o}n-Zygmund and singular integral operators in $L^p(\bb C)$.  We will present an alternative method to solve the Beltrami equation using the Hodge star operator and standard elliptic PDE theory.  We will also discuss a different method to prove the regularity of the solution.  This approach is partially based on work by Dittmar \cite{dittmar}.
\end{abstract}

\section{Introduction}
A quasiconformal map is an orientation-preserving homeomorphism $f\colon \bb C \to \bb C$, such that $f \in W^{1,2}_{\text{loc}}(\bb C)$ and there exists $K \ge 1$ with
\begin{align*}
|Df(z)|^2 \le K J_f(z)
\end{align*}
for a.e.\ $z \in \bb C$. Here, $Df$ is the derivative of $f$ and $J_f$ is the Jacobian of $f$.  The above equation can be rewritten as
\begin{align*}
|\partial_{\bar z}f|\le k |\partial_z f|
\end{align*}
where $k < 1$.  In other words, $f$ solves the following differential equation, often called the Beltrami Equation,
\begin{align}
\partial_{\bar z}f = \mu \partial_{z} f, \label{beq}
\end{align}
with $\mu \in L^\infty(\bb C)$ and $\|\mu\|_\infty \le k$.

The main goal of this paper is to prove the following theorem.
\begin{theorem}\label{mainthm}
Let $\mu \colon \bb C \to \bb C$ be measurable with compact support, $\|\mu\|_\infty = k < 1$.  Then there exists an orientation-preserving homeomorphism $\Phi \colon \bb C \to \bb C$ such that $\Phi \in W^{1,2}_{\text{loc}}(\bb C)$ and $\Phi$ solves $\eqref{beq}$ a.e. Additionally, the map $\Phi$ can be chosen so that $\Phi(0) = 0$ and $\Phi(1) = 1$.  In this case it is unique.
\end{theorem}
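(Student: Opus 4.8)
The plan is to recast the Beltrami equation \eqref{beq} as a real, divergence-form elliptic equation for the real part of $\Phi$, to solve that equation by Hilbert-space methods, and to reconstruct $\Phi$ via the Hodge star. Write $\mu = \mu_1 + i\mu_2$ and, for $\|\mu\|_\infty = k<1$, set
\[
A_\mu \;=\; \frac{1}{1-|\mu|^2}\begin{pmatrix} |1-\mu|^2 & -2\mu_2 \\[2pt] -2\mu_2 & |1+\mu|^2 \end{pmatrix},
\]
a symmetric matrix field with $\det A_\mu \equiv 1$, uniformly elliptic with constant depending only on $k$, and equal to the identity off $\operatorname{supp}\mu$. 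A short computation --- equivalently, equipping $\bb C$ with the conformal structure defined by the metric $|dz+\mu\,d\bar z|$ and letting $\star_\mu$ be the induced Hodge star on $1$-forms --- shows that $\Phi = u+iv$ solves \eqref{beq} a.e.\ precisely when $\operatorname{div}(A_\mu\nabla u)=0$ distributionally and $dv = \star_\mu\,du$; in that case $J_\Phi = \nabla u\cdot A_\mu\nabla u$ a.e., which is $\ge 0$ and is $>0$ wherever $\nabla u\ne 0$.

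For existence I would look for $u(z)=\operatorname{Re}z + g(z)$ with $g$ in the homogeneous Sobolev (Dirichlet) space $\dot W^{1,2}(\bb C)$. Since $A_\mu-\operatorname{Id}$ is bounded with compact support, $u$ solves the homogeneous equation iff $\operatorname{div}(A_\mu\nabla g) = -\operatorname{div}\!\big((A_\mu-\operatorname{Id})(1,0)\big)$, whose right-hand side is a compactly supported element of $W^{-1,2}$; the bilinear form $(g,h)\mapsto \int_{\bb C}A_\mu\nabla g\cdot\nabla h$ is bounded and coercive on $\dot W^{1,2}(\bb C)$ modulo constants, so Lax--Milgram produces $g$. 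Elliptic regularity (De Giorgi--Nash--Moser in general, classical Schauder when $\mu$ is smooth) gives $u\in C^{\alpha}_{\mathrm{loc}}(\bb C)$, smooth off $\operatorname{supp}\mu$, and a Laurent-expansion argument (a function harmonic with finite Dirichlet energy near $\infty$ has no $\log$ or positive-degree part) yields $g(z)\to 0$ as $z\to\infty$. Since $\bb C$ is simply connected and $d(\star_\mu du)=\operatorname{div}(A_\mu\nabla u)\,dx\wedge dy = 0$, the closed $1$-form $\star_\mu du$ has a primitive $v$, unique up to a constant, with $v = \operatorname{Im}z + \tilde g$ and $\tilde g\to 0$ near $\infty$. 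Then $\Phi_0 := u+iv$ lies in $W^{1,2}_{\mathrm{loc}}(\bb C)$, solves \eqref{beq} a.e., has $J_{\Phi_0}\ge 0$, and satisfies $\Phi_0(z)=z+o(1)$ as $z\to\infty$.

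It remains to see $\Phi_0$ is a homeomorphism. As a nonconstant continuous $W^{1,2}_{\mathrm{loc}}$ solution of \eqref{beq}, $\Phi_0$ is quasiregular, hence discrete and open with $J_{\Phi_0}>0$ a.e.; since $\Phi_0(z)\to\infty$ it is proper, and $\Phi_0(z)\sim z$ forces topological degree $1$, so $\Phi_0$ is a proper degree-one branched covering of $\bb C$ with no branch points, i.e.\ a homeomorphism onto $\bb C$. The safest route for this step is to prove it first for $\mu\in C_c^\infty(\bb C)$, where elliptic regularity makes $\Phi_0$ a local diffeomorphism and the degree argument is classical, and then recover general $\mu$ by approximating it in measure: the associated normalized solutions form a normal family of $K$-quasiconformal maps with $K=\tfrac{1+k}{1-k}$, and a locally uniform limit is $K$-quasiconformal and solves the limiting Beltrami equation --- this is also where the paper's discussion of regularity (e.g.\ $D\Phi_0\in L^{p}_{\mathrm{loc}}$ for some $p>2$) enters. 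Finally $\Phi := (\Phi_0-\Phi_0(0))/(\Phi_0(1)-\Phi_0(0))$ is well defined by injectivity of $\Phi_0$, still solves \eqref{beq} (affine post-composition preserves $\mu$), and has $\Phi(0)=0$, $\Phi(1)=1$.

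For uniqueness, if $\Phi_1,\Phi_2$ are normalized solutions then $h=\Phi_1\circ\Phi_2^{-1}$ is an orientation-preserving homeomorphism of $\bb C$ in $W^{1,2}_{\mathrm{loc}}$, and the transformation rule for Beltrami coefficients under composition (valid because quasiconformal maps and their inverses lie in $W^{1,2}_{\mathrm{loc}}$ and satisfy Lusin's condition $(N)$) gives $\mu_h=0$ a.e.; hence $h$ is $1$-quasiconformal, so conformal by Weyl's lemma, so $h(z)=az+b$, and the normalization forces $h=\operatorname{id}$, i.e.\ $\Phi_1=\Phi_2$. The main obstacle is the passage from ``$W^{1,2}_{\mathrm{loc}}$ solution with $J_{\Phi_0}\ge 0$'' to ``homeomorphism'': establishing discreteness/openness and the properness--degree bookkeeping, together with the regularity of $\Phi_0$ (continuity, and the improved integrability of $D\Phi_0$) that underpins both the approximation argument for rough $\mu$ and the composition rule used in the uniqueness proof. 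By contrast, the elliptic existence step and the Hodge-star reconstruction are routine once the correct function space for $g$ is fixed.
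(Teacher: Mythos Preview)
Your overall architecture --- rewrite \eqref{beq} via the Hodge star as a divergence-form elliptic equation, solve it by Hilbert-space methods on the homogeneous Dirichlet space, reconstruct the conjugate, then approximate general $\mu$ by smooth $\mu_n$ and pass to the limit through a normal family, with uniqueness via $\widetilde\Phi\circ\Phi^{-1}$ being conformal --- matches the paper's strategy closely. The approximation and uniqueness paragraphs are essentially identical to what the paper does.

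The substantive divergence is in \emph{which} elliptic problem is solved and, consequently, how injectivity is obtained in the smooth case. The paper does not solve the homogeneous equation $\operatorname{div}(A_\mu\nabla u)=0$ for $u=\operatorname{Re}z+g$. Instead it solves the \emph{inhomogeneous} Beltrami equation
\[
\partial_{\bar z}\Psi-\mu\,\partial_z\Psi=\partial_z\mu
\]
(equivalently $id(*_\mu df)=d\eta$ with $\eta=(\partial_z\mu)\,d\bar z$) by Riesz representation on the same Hilbert space you use, and then defines $\Phi$ as a primitive of the closed form $e^{\Psi}dz+\mu e^{\Psi}d\bar z$. The point of this detour is that it forces $\partial_z\Phi=e^{\Psi}\ne 0$ \emph{everywhere}, so $J_\Phi=|e^{\Psi}|^2(1-|\mu|^2)>0$ and $\Phi$ is a local diffeomorphism by the inverse function theorem; properness (from $\Psi(z)\to a_0$ at infinity) then makes $\Phi$ a covering of $\bb C$, hence a homeomorphism. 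No quasiregular machinery and no degree bookkeeping are needed.

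Your route produces $\Phi_0=u+iv$ with $J_{\Phi_0}\ge 0$ but not a priori $>0$, and the sentence ``elliptic regularity makes $\Phi_0$ a local diffeomorphism'' is the gap: regularity gives smoothness of $\Phi_0$, not nonvanishing of $d\Phi_0$. Smooth solutions of \eqref{beq} can in principle have isolated critical points where $d\Phi_0=0$, near which $\Phi_0$ behaves like $z\mapsto z^n$ for some $n\ge 2$. Your degree-$1$ argument does rule these out, but to run it you must first know that the local degree at every point is $\ge 1$; that requires either the Carleman--Bers similarity principle (for smooth $\mu$) or Reshetnyak's openness/discreteness theorem for quasiregular maps --- exactly the kind of input the paper's exponential trick is designed to avoid. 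So your proposal is salvageable, but the step you flag as ``the main obstacle'' genuinely needs one of these extra ingredients, whereas the paper sidesteps it entirely by solving the inhomogeneous problem first.
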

This theorem will follow from Theorem \ref{mainthm2} which proves the same conclusions with the stronger assumption that $\mu$ is $C^\infty$-smooth.  The hard part of the proof is showing that the theorem is true for the smooth case.  Once this is known, a short argument using well-known facts from quasiconformal theory gives Theorem \ref{mainthm}.  This will be presented in Section \ref{epde}.
The same result for $\mu$ without compact support follows easily from this theorem (see \cite[Ch. 5, Thm. 3]{ahlfors}). 

The first proof for Theorem $\ref{mainthm}$ was given by Morrey in \cite{morrey}.  Morrey also proved that the solutions are H\"older continuous. 
The most well known proof for a solution of \eqref{beq} comes from Bojarski in \cite{bojarski}.  The author of \cite{bojarski} employs singular integral operators and the theory of Calder\'{o}n-Zygmund operators.  

In order to give context to the approach used in this paper it is useful to review the proof from \cite{bojarski}.  An outline of it is as follows.  
Let $\mu \in C_c^\infty(\bb C)$.  Define the following two singular integral operators, the Cauchy transform,
\begin{align*}
Tf(z) = \frac{1}{\pi}\int_{\bb C} \frac{f(w)}{z-w} dw,
\end{align*}
and the Beurling-Ahlfors transform,
\begin{align*}
Hf(z) = \lim_{\epsilon \to 0}\frac{1}{\pi} \int_{\bb C \setminus B(z,\epsilon)} \frac{f(w)}{(w-z)^2}dw.
\end{align*}
One initially defines these operators for $f\in C_c^\infty(\bb C)$.  A version of $T$ can then be extended to $f \in L^p(\bb C) , p>2$.  In this case $Tf$ is $(1-2/p)$-H\"older. 
When $f \in C^1(\bb C)$ we also have that $T$ acts as an inverse to the $\partial_{\bar z}$ operator and $H$ maps $\partial_{\bar z} f$ to $\partial_z f$.  Also note that $\partial_z(Tf) = Hf$.

The theory of Calder\'{o}n-Zygmund operators is then used to extend $H$ to a bounded map on $L^p(\bb C), p>1$ and 
 $H$ is an isometry on $L^2(\bb C)$ (see \cite[Ch. 2]{stein}).  This then gives that as $p \to 2$, the operator norm of $H$ will approach $1$.  Since $|\mu| \le k < 1$, we get that $\|H(\mu\cdot)\|_{\text{op}} < 1$ as an operator on $L^p(\bb C)$ for $p$ sufficiently close to $2$.  Therefore $J = (\text{Id} - H\mu)^{-1}$ exists.
Let
\begin{align*}
\Phi(z) &= T(\mu + \mu J(H\mu)) + z.\\
\intertext{\noindent Note that $J = \sum_{n=0}^\infty (H\mu)^n$, so}
\partial_{\bar z} \Phi(z) &= \mu  + \mu \sum_{n=0}^\infty (H\mu)^{n+1}\\
\intertext{since $\partial_{\bar z}Tf = f$.  Also}\\
\mu \partial_z \Phi(z) &= \mu + \mu H\mu + \mu H \mu \sum_{n=0}^\infty (H\mu)^{n+1}  = \mu  + \mu \sum_{n=0}^\infty (H\mu)^{n+1}.
\end{align*}
So $\Phi$ is satisfies $\eqref{beq}$.

From the above computation it is not clear which properties $\Phi$ has.  We would like for $\Phi$ to be a homeomorphism but it is not evident here that it is even continuous.
Suppose $\mu \in C_c^\infty(\mathbb C)$, then without the Calder\'{o}n-Zygmund theory it is not hard to show that $H$ is an isometry on $L^2(\bb C)$ and that $H\mu \in L^2(\bb C)\cap C^\infty(\bb C)$.  So $J = (\operatorname{Id} - H\mu)^{-1}$ exists on $L^2(\bb C)$.  Additionally, $\mu + \mu J(H\mu) \in L^2(\bb C)$ and has compact support.  This is not enough to ensure that $T(\mu + \mu J(H\mu))$ is well-defined.
In other words, we run into difficulty even when $\mu$ is a very well behaved function.

By using the Calder\'{o}n-Zygmund theory,  we see that 
$H$ is an operator on $L^p(\bb C)$ for $p >2$, which gives that $\mu + \mu J(H\mu) \in L^p(\bb C)$.  This is sufficient to imply that $T(\mu + \mu J(H\mu))$ is H\"{o}lder continuous by straightforward arguments.
One of the purposes of the proof in this paper is to avoid the Calder\'on-Zygmund theory entirely and try to achieve continuity using only $L^2$-methods.

Even if we prove that $\Phi$ is continuous using the approach outlined above it is not clear that $\Phi$ is a homeomorphism.  
To try to show that the solution does have this property one first makes the assumption that $J_\Phi$ is nonzero.  Specifically, we write $\partial_z \Phi = e^\Psi$.  This gives a nonzero Jacobian,
\begin{align*}
J_\Phi &= |\partial_z \Phi|^2 - |\partial_{\bar z} \Phi|^2 \\
 &= |\partial_z \Phi|^2 (1-|\mu|^2) = |e^\Psi|^2(  1-|\mu|^2) > 0. \\
\intertext{If $\partial_z \Phi = e^\Psi$ and $\partial_{\bar z} \Phi =\mu e^\Psi$, then}
\partial_{\bar z} e^\Psi &= \partial_z (\mu e^\Psi).\\
\intertext{This implies that}
e^\Psi \partial_{\bar z} \Psi &= e^{\Psi}(\mu \partial_z \Psi + \partial_z \mu).
\end{align*}
If we cancel the exponential term we get an inhomogeneous Beltrami equation for $\Psi$,
\begin{align*}
\partial_{\bar z}\Psi - \mu \partial_{z} \Psi = \partial_z \mu.
\end{align*}
This is the motivation for studying the inhomogeneous version of \eqref{beq}.  If a solution exists then it will lead to the existence of a solution for \eqref{beq} that is a homeomorphism.
The proof that the solution is a homeomorphism is explained in detail in Section \ref{epde}.

Other proofs for Theorem \ref{mainthm} can be found in the literature.  One such proof comes from \cite{glutsyuk}.  The author of \cite{glutsyuk} uses a homotopy method to solve \eqref{beq} on the torus and then generalizes to $\bb C$.  The proof relies on similar integral operators as in \cite{bojarski}.  However, the proof only applies the operators on functions in $L^2(\bb C)$.  So the method does not need to use Calder\'{o}n-Zygmund theory.  Another proof is presented in \cite{hubbard}.

Dittmar studies the same problem as well (see \cite{dittmar} and \cite{ligocka}).  He converts \eqref{beq} into a elliptic PDE and then solves the equation using the standard theory.  
It is then a well known fact that functions that satisfy uniformly elliptic PDEs with smooth data are smooth.  This fact was initially proved by J. Schauder in \cite{schauder}.  For a thorough discussion of this see \cite[Ch. 6]{trudinger}.

The method to proving Theorem \ref{mainthm} in this paper follows Dittmar's approach.  The proof in \cite{dittmar} is not widely known.  The author of \cite{dittmar} uses real variable notation which yields long and difficult to understand differential equations. To avoid this, in this paper we try to tie this approach to the geometric aspects of the problem.  The Beltrami equation describes the conformal structure for a given Riemannian metric.  When viewed through this lens the equations arise naturally.  Additionally, due to this motivation, we always use complex variable notation, which is more natural in this setting.

Historically, the Beltrami equation arises when studying the conformal geometry of a surface.  The function $\mu$ from \eqref{beq} defines a Riemannian metric $|dz + \mu d\bar z|^2$ on $\bb C$.  In fact any metric on $\bb C$ can be written in coordinates as
\begin{align*}
g = \nu|dz + \mu d\bar z|^2,
\end{align*}
where $\nu$ is a positive real function and $\|\mu\|_\infty < 1$.  This shows that $\mu$ encodes a conformal structure on $\bb C$.  Solving \eqref{beq} gives a map that changes the structure given by $\mu$ to the standard conformal structure.  To exploit the relationship between equation $\eqref{beq}$ and the geometry of $\bb C$ we use the Hodge star operator, $*$.  

The Hodge star operator was first used to give a decomposition of the space of $k$-forms on a Riemannian manifold.  One component of the decomposition is the space of harmonic forms.  In the Euclidean case, harmonicity defined by $*$ corresponds to the usual notion and therefore is closely related to the Cauchy-Riemann equations.  Since $\mu$ defines a Riemannian metric it is natural to consider harmonic functions in the Hodge sense and we see in Section \ref{hso} that they are closely related to solutions of \eqref{beq}.

Given an $n$-dimensional orientable Riemannian manifold $(M,g)$, let $\Omega^k(M)$ be the space of $k$-forms on $M$.  The Hodge star operator,
\begin{align*}
*\colon\Omega^k(M) \to \Omega^{n-k}(M),
\end{align*}
is a linear transformation on forms.  In addition, $*$ leaves orthonormal bases invariant when applied on a coordinate chart (see Section \ref{hso}).  We can then see that certain eigenforms of $*$ correspond to solutions of \eqref{beq}.  Furthermore, we can also define the Laplace-Beltrami operator $\Delta_g$ using $*$.  The Laplace-Beltrami operator is elliptic and we can apply the theory of elliptic PDEs to solve \eqref{beq}.  The elliptic PDE given by $\Delta_g$ is the same one as in \cite{dittmar}.  Our approach shows why the PDE is naturally linked to \eqref{beq}.

In order to prove the regularity in Theorem \ref{mainthm} one can quote elliptic regularity as in \cite{dittmar}.  We present in this paper (Section \ref{regularity}) an original proof partially based on methods in \cite[Part 4]{hellwig}.  The author of \cite{hellwig} proves a more general statement for elliptic regularity,  in our case a simplified version suffices since \eqref{beq} is a first order system.  We study an integral equation for \eqref{beq} by introducing the kernel
\begin{align*}
S(w,z) = \frac{1}{w-z + \mu(w) (\overline{w}-\overline{z})}.
\end{align*}
The solution to $\eqref{beq}$ can be written as functions integrated against $S$.  This corresponds to the holomorphic case when $\mu = 0$ and $S(w,z)$ is the Cauchy kernel.

In Section \ref{hso} we introduce basic facts about the Hodge star operator and its relation to the Beltrami equation.  Section \ref{epde} gives a solution to \eqref{beq} and proves Theorem \ref{mainthm} except for the regularity of the solution $\Phi$. In Section \ref{regularity} we prove that $\Phi \in C^1(\bb C)$ for $\mu \in C_c^\infty (\bb C)$.  This, by Section \ref{epde}, implies that $\Phi \in W^{1,2}_{\text{loc}}(\bb C)$ for $\mu$ measurable with compact support.

In Section \ref{iso} we point out the connection between Theorem \ref{mainthm} and the existence of isothermal coordinates on a Riemannian manifold of dimension two.  First we show that solving $\Delta_g$ is equivalent to finding isothermal coordinates.  Then we give a different proof for the existence of local solutions to \eqref{beq}.  This also guarantees the existence of isothermal coordinates.
The existence of a global solution in Theorem \ref{mainthm} implies the existence local solutions needed for coordinates. However, there exists a simpler proof if we assume $\mu$ is H\"older continuous.  

Using this assumption, Chern proves a local version of Theorem \ref{mainthm} in \cite{chern}.  The result guarantees $C^1$-smooth solutions.  This is weaker than Theorem \ref{mainthm} but has the advantage that it can be extended to higher dimensions.  In \cite{newlander}, the authors formulate a higher-dimensional version of \eqref{beq}.  They apply Chern's technique, not in order to get isothermal coordinates, but rather to find the conditions for the existence of a complex structure on certain even-dimensional Riemannian manifolds.  They prove that if the coefficients of the analog of $\mu$ in the higher-dimensional setting are $C^{2n}$-smooth and satisfy an integrability condition, then a complex structure exists.

We provide a proof for dimension $n = 2$ that is a simplified version of the method in \cite{chern}.  The method involves using the integral operators mentioned above.  Many technical difficulties are avoided since $\mu$ has the added regularity of H\"older continuity.

The integrability conditions for the existence of isothermal coordinates in dimensions greater than $3$ involve the Riemannian curvature tensor.  In the case of $n =3$ the condition is that the Schouten tensor vanishes and in the case of $n\ge4$ the condition is that the Weyl tensor vanishes.  Both of these tensors come from decompositions of the Riemannian curvature tensor.  When one assumes the metric is $C^3$ and the integrability condition is satisfied, then local isothermal coordinates exist.  For details regarding this see \cite{schouten}.  Much less is known when the metric has less regularity.  Similarly, the corresponding global problem is not well understood.
For more results regarding the $n > 2$ Beltrami equation see \cite{iwaniecmartin} and \cite{donaldson}.

Finally, we have attempted to make this paper entirely self-contained.  The only facts not proven are certain properties about quasiconformal maps that are shown in \cite{ahlfors}.  They are needed in Theorem \ref{mainthm}. The methods used are intended to simplify the approach to solving \eqref{beq} and to present the connection to the geometric aspects of the problem.

\subsection{Notation}
The notation used in this paper is standard.  The variable $z$ will always refer to a complex variable while $x$ and $y$ will be the standard Euclidean coordinates in $\bb R^2$.  The space $\Omega^l(\bb C)$ is the space of smooth $l$-forms on $\bb C$.  The 1-from $dz = dx + i dy$.  Integrals will always be integrated over either $dV$ or $dA$.  When $dV$ is used we refer to the volume form given a Riemannian metric.  When $dA$ is used we refer to Lebesgue measure on $\bb C$.

\subsection{Acknowledgments}
The author would like to thank Mario Bonk for introducing him to the problem and the many helpful discussions.

\section{Preliminaries}\label{hso}
In this section we review the definitions and properties of the Hodge star operator and related concepts in Riemannian Geometry.  After that we present an example of how to compute the Laplace-Beltrami operator with the Hodge star operator, given a Riemannian metric.  Lastly, we discuss the relation to the Beltrami equation.

\subsection{Hodge Star Operator}
Let $g$ be a Riemannian metric defined on $\bb C$.  The metric $g$ can be represented as a length element $ds^2$.  Classically, this can be written as
\begin{align*}
ds^2 = E dx^2 + F dxdy + G dy^2,
\end{align*}
where $E,F$ and $G \colon \bb R^2 \to \bb R$ are smooth functions.  The standard Euclidean metric is $dx^2 + dy^2$.
On the other hand, since we are in dimension two, everything can be written in complex notation.  In that case,
\begin{align*}
ds^2 = e^{\lambda}|dz+ \mu d\bar z|^2,
\end{align*}
where $\lambda \colon \bb C \to (0,\infty)$ and $\mu\colon \bb C \to \bb C$ are smooth functions and $|\mu| \le k < 1$.

Let $\Omega^l(\bb C)$ be the space of $l$-forms on $\bb C$.  For $l=0$ this coincides with smooth functions.  For $l=1$, this corresponds to elements of the form
\begin{align*}
fdz + g d\bar z,
\end{align*}
where $f$ and $g$ are smooth functions.  When $l=2$ this corresponds to elements of the form
\begin{align*}
f dz \wedge d\bar z,
\end{align*}
where $f$ is a smooth function.

Given a smooth metric $g$, we can define the Hodge star operator $*_g\colon\Omega^l(\bb C) \to \Omega^{2-l}(\bb C)$ as follows:  If $du$ and $dv$ define an orthonormal frame around $p \in \bb C$, then
\begin{align*}
*_gdu = dv,\quad*_gdv = -du
\end{align*}
for $1$-forms and
\begin{align*}
 *_g1 = du\wedge dv,\quad  *_gdu\wedge dv = 1
\end{align*}
for $0$- and $2$-forms.  Here $du \wedge dv$ is the volume form on $\bb C$.  Then, $*_g$ extends by linearity to $\Omega^k(\bb C)$ over functions.
 
The following proposition lists some properties of the Hodge star operator.
\begin{prop}\label{Hodgeprop}  Let $f,h \in C^\infty(\bb C)$ and $\omega,\eta \in \Omega^l(\bb C)$, then $*_g$ satisfies the following properties:
\begin{itemize}

\item[\emph{(1)}] $*_g(f\omega + h \eta) = f*_g\omega + h*_g\eta$

\item[\emph{(2)}] $*_g*_g\omega =(-1)^{l}\omega$

\item[\emph{(3)}] $\omega \wedge *_g\eta = \langle \omega,\bar\eta\rangle_g dV$

\end{itemize}
\end{prop}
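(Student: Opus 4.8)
The plan is to verify each of the three properties by reducing everything to a local orthonormal frame. Fix $p \in \bb C$ and choose $1$-forms $du, dv$ that form an orthonormal coframe in a neighborhood of $p$; that is, $g = du^2 + dv^2$ locally, and $\{du, dv\}$ (resp. $\{1\}$, resp. $\{du\wedge dv\}$) is an orthonormal basis of $\Omega^1$ (resp. $\Omega^0$, resp. $\Omega^2$) at each point of that neighborhood. By construction $*_g$ acts on these basis elements by the stated formulas, so the whole computation becomes an exercise in bilinear (or, more precisely, $C^\infty(\bb C)$-bilinear) algebra on a $2$-dimensional fiber.

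For property (1), I would simply note that $*_g$ is \emph{defined} to extend $C^\infty(\bb C)$-linearly over functions from its action on the orthonormal frame elements, so if $\omega = a\,du + b\,dv$ and $\eta = c\,du + d\,dv$ with $a,b,c,d \in C^\infty(\bb C)$, then $f\omega + h\eta = (fa + hc)\,du + (fb + hd)\,dv$ and applying $*_g$ termwise gives $f*_g\omega + h*_g\eta$; the same argument works verbatim in degrees $0$ and $2$. For property (2), I would check it on the frame elements degree by degree: in degree $1$, $*_g*_g du = *_g dv = -du$ and $*_g*_g dv = *_g(-du) = -dv$, so $*_g*_g = -\mathrm{id} = (-1)^1\mathrm{id}$; in degree $0$, $*_g*_g 1 = *_g(du\wedge dv) = 1 = (-1)^0 \cdot 1$, and similarly in degree $2$; then (1) extends this from basis elements to all of $\Omega^l(\bb C)$.

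For property (3), the key point is that the pointwise inner product $\langle\cdot,\cdot\rangle_g$ on $\Omega^l$ is, by definition, the one making the chosen frame orthonormal, and $dV = du\wedge dv$. In degree $1$, with $\omega = a\,du + b\,dv$ and $\eta = c\,du + d\,dv$, one computes $*_g\eta = c\,dv - d\,du$, so $\omega \wedge *_g\eta = (ad\,du\wedge dv... )$ — more carefully, $\omega\wedge *_g\eta = (a\,du + b\,dv)\wedge(c\,dv - d\,du) = ac\,du\wedge dv - bd\,dv\wedge du = (ac + bd)\,du\wedge dv$, which equals $\langle\omega,\bar\eta\rangle_g\,dV$ once one recalls the convention that $\langle\cdot,\cdot\rangle_g$ is extended to complex-valued forms as a Hermitian (hence the bar on $\eta$) pairing that is $\bb C$-linear in the first slot, so $\langle\omega,\bar\eta\rangle_g = a\bar{\bar c} + b\bar{\bar d} = ac + bd$ when $a,b,c,d$ may themselves be complex. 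Degrees $0$ and $2$ are even shorter: $f \wedge *_g h = f\,(h\,du\wedge dv) = f h\,dV$ and similarly for $2$-forms. In each case linearity (property (1)) promotes the identity from frame elements to arbitrary $\omega,\eta$.

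The only genuine subtlety — and the step I would be most careful about — is bookkeeping the complex-conjugation and $\bb C$-linearity conventions in property (3): the paper works throughout with complex-valued forms (e.g. $dz = dx + i\,dy$), so $\langle\cdot,\cdot\rangle_g$ must be the sesquilinear extension of the Riemannian metric, and one has to confirm that the bar in $\langle\omega,\bar\eta\rangle_g$ is placed exactly so that the wedge-product computation — which is $\bb C$-\emph{bilinear} with no conjugation — matches it. Everything else is routine $2\times 2$ linear algebra and an appeal to the definition of $*_g$; there is no analytic content and no obstacle beyond this notational care.
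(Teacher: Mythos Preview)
Your proposal is correct and follows essentially the same approach as the paper: both prove each property by writing $\omega$ and $\eta$ in a local orthonormal coframe $\{du,dv\}$ and computing directly on basis elements, with (1) immediate from the definition, (2) checked degree by degree, and (3) verified by expanding the wedge product. You are in fact slightly more thorough than the paper, which omits the degree-$0$ and degree-$2$ cases of (3) and does not discuss the conjugation convention explicitly.
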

Note that in the above proposition we use an inner product on forms.  Any Riemannian metric defines an inner product on the tangent bundle of the manifold.  This, in turn, defines an inner product on the cotangent bundle, which extends to an inner product on forms.  This inner product can then be extended to vectors over $\bb C$, which will give a Hermitian inner product.  Explicit computations of this inner product will be provided below.
\begin{proof}

Part (1) is clear by definition.

For part (2), If $\omega$ is a function, then $*_g*_g\omega = \omega*_gdu\wedge dv = \omega$.  If  $\omega$ is a $1$-form, let $\omega = fdu + h dv$.  Then $*_g*_g\omega = (f*_g*_gdu + h*_g*_gdv) = -fdu-hdv = -\omega$.  If $\omega$ is a $2$-form, let $\omega = f du \wedge dv$.  Then $*_g*_g\omega = f*_g*_gdu\wedge dv = fdu\wedge dv = \omega$.

We will only show part (3) for $1$-forms.
\begin{align*}
\omega \wedge *_g\eta &= (f_1du + h_1dv)\wedge*_g(f_2du + h_2 dv) = (f_1du + h_1dv) \wedge (f_2dv-h_2du) \\
&= (f_1f_2 + h_1h_2)du \wedge dv =\langle \omega, \bar\eta\rangle du \wedge dv,
\end{align*}
since $du$ and $dv$ form an orthonormal frame in the metric.
\end{proof}
 
Let $d \colon \Omega^l(\bb C) \to \Omega^{l+1}(\bb C)$ be the exterior derivative.  The Hodge star operator allows us to define the adjoint operator for $d$ with respect to integration on $\bb C$.
For $l \ge 1$, define $\delta\colon\Omega^l(\bb C) \to \Omega^{l-1}(\bb C)$ as 
\begin{align*}
\delta = -*_gd*_g.
\end{align*} 
Here, $\delta$ is the adjoint of $d$ only when applied to differential forms with compact support.  To see this, take the inner product
\begin{align*}
(\alpha , \beta) \coloneqq \int_{\bb C} \langle \alpha , \beta\rangle dV
\end{align*}
for $\alpha,\beta \in \Omega^l(\bb C)$.
By (3) above,
\begin{align*}
(d\omega,\eta) &= \int_{\bb C} \langle d\omega,\eta \rangle dV = \int_{\bb C} d\omega \wedge *_g\bar\eta \\
&=\int_{\bb C} d(\omega \wedge *_g\bar\eta) - (-1)^l\int_{\bb C} \omega \wedge d*_g\bar\eta
\intertext{where the first term is $0$ when $\omega$ or $\eta$ has compact support, by Stokes' theorem.  Therefore}
(d\omega,\eta) &= -\int_{\bb C} \omega \wedge *_g*_g(d*_g\eta) =  (\omega,(-*_gd*_g)\eta) = (\omega,\delta\eta).
\end{align*}

We can use $\delta$ and $d$ to define the Laplace-Beltrami operator,
\begin{align*}
\Delta_g \coloneqq d\delta + \delta d.
\end{align*}
Note that this depends on $g$ because $\delta$ depends on $g$.  When $g$ is the Euclidean metric, then $\Delta_g$ is the negative of the standard Laplacian.

\subsection{Calculating $\Delta_g$ and $*_g$ in Complex Coordinates}\label{calc}
As an example, let us first consider $\bb C$ with the Euclidean metric, $g = |dz|^2$.
By Proposition \ref{Hodgeprop}, given $\omega,\eta \in \Omega^1(\bb C)$,
\begin{align*}
\eta \wedge *_g\omega = \langle \eta,\bar\omega \rangle \frac{i}{2}dz\wedge d\bar z.
\end{align*}
As mentioned above, the inner product on forms is defined by choosing the form's vector field representative in the dual space and calculating the inner product defined by the metric.  So for example, the dual vector field of $dx$ is $\partial_x$ and the dual vector field for $dy$ is $\partial_y$.  This duality can then be extended to vectors with complex coefficients by taking a linear extension.
So the dual vector field for $dz$ is $\partial_x + i \partial_y = 2 \partial_z$.  The inner product then is defined so that it is Hermitian.  This gives that
\begin{align*}
\langle dz,d z\rangle = \langle 2\partial_z,2\partial_z\rangle = [1,-i]^T\overline{[1,-i]} = 2.
\end{align*}
So
\begin{align*}
dz\wedge *_gd\bar z &= \langle dz,d z \rangle \frac{i}{2} dz \wedge d\bar z = idz \wedge d\bar z\\
\intertext{and}
*_gd\bar z &= i d\bar z.
\end{align*}
Now, define the metric $g \coloneqq |dz + \mu d\bar z|^2$ for some $\mu \in C_c^\infty(\bb C)$, where $\|\mu\|_\infty = k < 1$.
Then the volume form becomes
\begin{align*}
dV = \frac{i}{2}(1-|\mu|^2)dz\wedge d\bar z.
\end{align*}
We can also calculate the relevant inner products.  The matrix corresponding to inner products on 1-forms is the inverse of the matrix corresponding to inner products on vector fields.  It is convenient to consider the basis $\{dz, d\bar z\}$ instead of the basis $\{dx,dy\}$.  Explicitly it is
\[A = 
-\frac{4}{(1-|\mu|^2)^2}\left( \begin{array}{cc}
\mu & -\frac{1+ |\mu|^2}{2} \\
-\frac{1+ |\mu|^2}{2} & \bar \mu \end{array} \right).
\]
So for example,
\begin{align*}
\langle dz,d z\rangle &= [1,0]^TA[0,1] = \frac{2(1+|\mu|^2)}{(1-|\mu|^2)^2}\\
\intertext{and}
 \langle dz,d\bar z\rangle &= [1,0]^TA[1,0] = -\frac{4\mu}{(1-|\mu|^2)^2}.
\end{align*}
By conjugating, the above computation also gives $\langle d\bar z, d\bar z\rangle$ and $\langle d\bar z, dz\rangle$.
Therefore,
\begin{align*}
*_gdz &= -i\frac{1+|\mu|^2}{1-|\mu|^2}dz - 2i\frac{\mu}{1-|\mu|^2}d\bar z\\
\intertext{and}
*_g d \bar z &= 2i\frac{\bar \mu}{1-|\mu|^2}dz + i\frac{1 + |\mu|^2}{1-|\mu|^2}d\bar z.
\end{align*}
To simplify notation, let 
\begin{align}
a = \frac{1+|\mu|^2}{1-|\mu|^2} \label{a}
\end{align}
and 
\begin{align}
b = \frac{2\mu}{1-|\mu|^2}. \label{b}
\end{align}
Let $f: \bb C \to \bb C$ be a $C^\infty$-smooth function.  We can now compute $\Delta_g f$.
\begin{align}
*_gdf &= ( -ia\partial_zf + i\bar b \partial_{\bar z}f)dz +(-ib \partial_zf + ia\partial_{\bar z}f)d\bar z. \label{dmu1}\\
\intertext{Next,}
d*_g(df) &=  (\partial_{\bar z}( ia\partial_zf - i\bar b \partial_{\bar z}f) +\partial_z(-ib \partial_zf + ia\partial_{\bar z}f))dz \wedge d\bar z.\nonumber\\
\intertext{Finally,}
*_g(d*_g(df)) &= \frac{2}{i(1-|\mu|^2)}(\partial_{\bar z}( ia\partial_zf - i\bar b \partial_{\bar z}f) +\partial_z(-ib \partial_zf + ia\partial_{\bar z}f)).\nonumber
\end{align}

\subsection{Beltrami Equation}\label{be}
For $\mu$ as above, the Beltrami equation, \eqref{beq}, is
\begin{align*}
\partial_{\bar z} f = \mu\partial_z f.
\end{align*}
When $\mu \equiv 0$ the above corresponds to the Cauchy-Riemann equations.  Let 
\begin{align*}
d_\mu \coloneqq \frac{1}{2}(1-i*_\mu)d,
\end{align*}
where $*_\mu$ is the Hodge star operator for the metric $g = |dz + \mu d\bar z|^2$.  We write here $*_\mu$ instead of $*_g$ to emphasize that the Hodge star operator depends only on $\mu$.

In this Hodge star formalism the Cauchy-Riemann equations can be written as
\begin{align*}
d_0f = 0.
\end{align*}
For a general $\mu$, one gets a similar result.
\begin{lemma}\label{formbeq}
A function $f\in C^\infty(\bb C)$ satisfies $d_\mu f = 0$ if and only if $f$ satisfies \eqref{beq}.
\end{lemma}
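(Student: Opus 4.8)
The plan is to reduce the identity $d_\mu f = 0$ to a single scalar equation by a direct computation, using the formulas for $*_\mu dz$ and $*_\mu d\bar z$ already recorded in Section \ref{calc}. First I would write $df = \partial_z f\, dz + \partial_{\bar z} f\, d\bar z$ and apply $*_\mu$ termwise, which (with the abbreviations $a$ and $b$ from \eqref{a} and \eqref{b}) is exactly the content of \eqref{dmu1}:
\begin{align*}
*_\mu df = (-ia\,\partial_z f + i\bar b\,\partial_{\bar z}f)\,dz + (-ib\,\partial_z f + ia\,\partial_{\bar z}f)\,d\bar z.
\end{align*}
Multiplying by $-i$, subtracting from $df$, and dividing by $2$ then gives
\begin{align*}
d_\mu f = \tfrac12\big((1-a)\,\partial_z f + \bar b\,\partial_{\bar z}f\big)\,dz + \tfrac12\big(-b\,\partial_z f + (1+a)\,\partial_{\bar z}f\big)\,d\bar z.
\end{align*}

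Next I would simplify the coefficients using $1 - a = -2|\mu|^2/(1-|\mu|^2)$, $1 + a = 2/(1-|\mu|^2)$, and $b = 2\mu/(1-|\mu|^2)$. A short calculation shows that both the $dz$- and the $d\bar z$-coefficients factor through $\partial_{\bar z} f - \mu\,\partial_z f$; explicitly,
\begin{align*}
d_\mu f = \frac{1}{1-|\mu|^2}\,\big(\partial_{\bar z}f - \mu\,\partial_z f\big)\,\big(\bar\mu\,dz + d\bar z\big).
\end{align*}
Since $\|\mu\|_\infty = k < 1$, the scalar factor $1/(1-|\mu|^2)$ is finite and nowhere zero, and the $1$-form $\bar\mu\,dz + d\bar z$ is nowhere zero because its $d\bar z$-component is identically $1$. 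Hence $d_\mu f = 0$ pointwise if and only if $\partial_{\bar z}f - \mu\,\partial_z f = 0$ pointwise, which is precisely \eqref{beq}, and the equivalence follows.

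I do not expect a genuine obstacle here: the lemma is a bookkeeping identity. The only point requiring a little care is verifying that the two coefficient functions really do collapse to a common multiple of $\partial_{\bar z}f - \mu\,\partial_z f$ rather than remaining two independent linear combinations of $\partial_z f$ and $\partial_{\bar z}f$ — this is exactly where the particular values of $a$ and $b$, i.e.\ the specific Hodge star operator attached to the metric $|dz + \mu\,d\bar z|^2$, enter. As a consistency check, setting $\mu \equiv 0$ gives $d_0 f = \tfrac12(\partial_{\bar z} f)\,d\bar z$, so $d_0 f = 0$ recovers the Cauchy-Riemann equation $\partial_{\bar z} f = 0$, in agreement with the discussion preceding the statement.
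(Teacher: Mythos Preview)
Your proof is correct and follows essentially the same route as the paper: compute $d_\mu f$ from \eqref{dmu1}, simplify the coefficients using the explicit values of $a$ and $b$, and recognise the Beltrami equation. The only cosmetic difference is that you package the result as the single factorisation $d_\mu f = (1-|\mu|^2)^{-1}(\partial_{\bar z}f - \mu\,\partial_z f)(\bar\mu\,dz + d\bar z)$, which handles both components (and the degenerate case $\mu=0$) at once, whereas the paper treats the $dz$- and $d\bar z$-coefficients separately.
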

\begin{proof}
We wish to calculate $d_\mu f$.
In Section  \ref{calc}, the term $*_\mu df$ was already calculated.  We see from \eqref{dmu1} that
\begin{align*}
*_\mu df = ( -ia\partial_zf + i\bar b \partial_{\bar z}f)dz +(-ib \partial_zf + ia\partial_{\bar z}f)d\bar z.
\end{align*}
So
\begin{align*}
d_\mu f = \frac{1}{2}(-a\partial_zf+\bar b \partial_{\bar z} f + \partial_zf)dz +\frac{1}{2}(-b\partial_zf+a\partial_{\bar z}f +\partial_{\bar z}f)d\bar z.
\end{align*}
Setting this to $0$ and looking at the $d\bar z$ component gives,
\begin{align*}
\partial_{\bar z}f + a\partial_{\bar z}f= b\partial_zf.
\end{align*}
By \eqref{a} and \eqref{b},
\begin{align*}
(1-|\mu|^2 +1 +|\mu|^2) \partial_{\bar z}f = 2\mu\partial_z f.
\end{align*}
This is the Beltrami equation.  
The $dz$ component gives
\begin{align*}
\partial_z f - a\partial_z f = - \bar b\partial_{\bar z} f.
\end{align*}
We again use \eqref{a} and \eqref{b} to get that
\begin{align*}
(1-|\mu|^2 - (1+|\mu|^2))\partial_z f = -2\bar \mu\partial_{\bar z} f.
\end{align*}
If $\mu = 0$ this equation is trivial.  Otherwise, it simplifies to \eqref{beq}.
\end{proof}
We now continue this type of calculation to get a connection between harmonic functions with respect to $\mu$ and solutions to \eqref{beq}.
\begin{lemma}\label{formharm}
Suppose $f \in C^\infty(\mathbb C)$ satisfies 
\begin{align*}
\Delta_\mu f = 0.
\end{align*}
Then there exists a solution to \eqref{beq} of the form $g-if$ where $dg = *_\mu df$.
\end{lemma}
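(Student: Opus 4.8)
The plan is to show that the hypothesis $\Delta_\mu f = 0$ forces the $1$-form $*_\mu df$ to be closed, invoke the Poincar\'e lemma to obtain the potential $g$, and then verify directly that $d_\mu(g - if) = 0$, after which Lemma \ref{formbeq} finishes the argument.

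First I would unwind the Laplace--Beltrami operator acting on a function. Since $\delta$ annihilates $0$-forms, $\Delta_\mu f = \delta d f = -*_\mu d *_\mu d f$. Applying $*_\mu$ and using that $*_\mu$ is invertible on $2$-forms (Proposition \ref{Hodgeprop}(2) with $l = 2$), the equation $\Delta_\mu f = 0$ becomes equivalent to $d(*_\mu df) = 0$. Thus $*_\mu df$ is a closed smooth $1$-form on $\bb C$. Since $\bb C$ is simply connected, the Poincar\'e lemma produces a smooth, in general complex-valued, function $g$ with $dg = *_\mu df$; it is unique up to an additive constant, which does not affect the conclusion.

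Next I would compute $d_\mu(g - if)$ directly. By definition $d_\mu(g-if) = \frac{1}{2}(1 - i*_\mu)(dg - i\,df) = \frac{1}{2}(1 - i*_\mu)(*_\mu df - i\,df)$, using $dg = *_\mu df$. Expanding the product and using $*_\mu *_\mu \omega = -\omega$ on $1$-forms (Proposition \ref{Hodgeprop}(2) with $l = 1$), the four resulting terms cancel in pairs, so $d_\mu(g - if) = 0$. By Lemma \ref{formbeq}, $g - if$ then satisfies \eqref{beq}.

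I expect no real obstacle beyond bookkeeping. The two points needing care are the convention that $\delta$ vanishes on $0$-forms, so that the $d\delta$ summand of $\Delta_\mu$ drops out, and the sign $(-1)^l$ in $*_\mu *_\mu$, which is $+1$ on $2$-forms but $-1$ on $1$-forms. The Poincar\'e lemma step uses nothing about $\mu$ beyond smoothness; on a domain or manifold with nontrivial first cohomology the same computation would yield only a local potential $g$, and hence only local solutions of the Beltrami equation.
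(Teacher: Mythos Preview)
Your proposal is correct and follows essentially the same route as the paper: show $\Delta_\mu f = 0$ forces $*_\mu df$ to be closed, use the Poincar\'e lemma on $\bb C$ to get $g$ with $dg = *_\mu df$, then verify $d_\mu(g-if)=0$ via $*_\mu^2 = -1$ on $1$-forms and invoke Lemma \ref{formbeq}. The only cosmetic difference is that the paper writes the cancellation as the sum of two equations ($*_\mu dg = -df$ and $-i*_\mu df = -idg$) rather than expanding $\frac{1}{2}(1-i*_\mu)(*_\mu df - i\,df)$ directly, but these are the same computation.
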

\begin{proof}
Recall that
\begin{align*}
\Delta_\mu f = -*_\mu d(*_\mu df)
\end{align*}
So $*_\mu df$ is closed and there exists $g$ such that $dg = *_\mu df$.  By Proposition \ref{Hodgeprop},
\begin{align*}
*_\mu dg &= *_\mu (*_\mu df) = -df\\
\intertext{and}
-i*_\mu df &= -idg.\\
\intertext{Adding these two equations gives}
*_\mu d(g-if) &= -id(g-if),\\
\intertext{which is}
d_\mu(g-if) &= 0.
\end{align*}
By Lemma \ref{formbeq}, $g-if$ is the desired solution to the Beltrami equation.
\end{proof}

\section{Elliptic PDE Methods}\label{epde}

The goal of this section is to prove the existence and uniqueness of solutions to the Beltrami equation
\begin{align*}
\partial_{\bar z} f = \mu \partial_z f
\end{align*}
for $\mu \colon\bb C \to \bb C$ with $|\mu(z)| \le k < 1$.
We have seen above that the Beltrami equation can be expressed in terms of the Hodge star as
\begin{align*}
d_\mu f \coloneqq \frac{1}{2}(1-i*_\mu)df = 0.
\end{align*}
By Lemma \ref{formharm} the existence of a solution $f$ is equivalent to the existence of a solution of Laplace's equation, $\Delta_\mu f = 0$.  It is known that locally solutions always exist (this will be proven in Section \ref{iso}, for the standard proof see \cite[Ch. 5.11]{taylor}).  To find a global solution on $\bb C$ more work is required.
We will prove the following theorem.
\begin{theorem}\label{mainthm2}
Let $\mu \in C_c^\infty(\bb C)$ with $\|\mu\|_\infty = k < 1$.  There exists an orientation-preserving homeomorphism $\Phi \colon \bb C \to \bb C$ that solves the Beltrami equation,
\begin{align*}
\partial_{\bar z} \Phi = \mu \partial_z \Phi,
\end{align*}
with $\Phi \in C^1(\bb C)$.
\end{theorem}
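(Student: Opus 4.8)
The plan is to follow the route sketched in the introduction: instead of looking for $\Phi$ directly, I would first solve an inhomogeneous equation for $\Psi=\log\partial_z\Phi$, which by construction yields a solution with strictly positive Jacobian, and then read off the homeomorphism property from the behaviour at infinity. \textbf{Step 1 (reduction).} Suppose one can produce $\Psi\in C^\infty(\bb C)$ with $\Psi(z)\to 0$ as $|z|\to\infty$ and
\begin{align*}
\partial_{\bar z}\Psi-\mu\,\partial_z\Psi=\partial_z\mu .
\end{align*}
A short computation shows this is precisely the condition for the $1$-form $\alpha:=e^{\Psi}\,dz+\mu e^{\Psi}\,d\bar z$ to be closed, so on the simply connected plane $\alpha=d\Phi$ for some $\Phi\in C^\infty(\bb C)$. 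Then $\partial_z\Phi=e^{\Psi}$ and $\partial_{\bar z}\Phi=\mu e^{\Psi}=\mu\,\partial_z\Phi$, so $\Phi$ solves \eqref{beq}, and $J_\Phi=|e^{\Psi}|^2(1-|\mu|^2)>0$ everywhere; in particular $\Phi$ is an orientation-preserving local diffeomorphism.

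\textbf{Step 2 (solving for $\Psi$ via $\Delta_\mu$).} By the computation in Section \ref{be}, the displayed equation is equivalent to $d_\mu\Psi=\omega$, where
\begin{align*}
\omega:=\frac{\partial_z\mu}{1-|\mu|^2}\bigl(\bar\mu\,dz+d\bar z\bigr)
\end{align*}
is a smooth compactly supported $1$-form with $*_\mu\omega=i\omega$. This is the inhomogeneous analogue of Lemma \ref{formharm}: a direct computation gives $d(d_\mu v)=\tfrac{i}{2}(\Delta_\mu v)\,dV$ for every $v\in C^\infty(\bb C)$, and $d\omega=\tfrac{i}{2}\rho\,dV$ for a uniquely determined $\rho\in C_c^\infty(\bb C)$ with $\operatorname{supp}\rho\subseteq\operatorname{supp}\mu$, so any solution of $d_\mu\Psi=\omega$ must satisfy $\Delta_\mu\Psi=\rho$. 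Since $\omega$ has compact support, Stokes' theorem gives $\int_{\bb C}\rho\,dV=-2i\int_{\bb C}d\omega=0$, and it is exactly this vanishing that allows one to solve $\Delta_\mu v=\rho$ on all of $\bb C$ by a solution with $v(z)\to 0$ (in fact $v(z)=O(|z|^{-1})$) as $|z|\to\infty$: $\Delta_\mu$ is uniformly elliptic with smooth coefficients and equals the flat Laplacian outside a compact set, so one may exhaust $\bb C$ by large disks or build a fundamental solution, the condition $\int\rho\,dV=0$ ruling out logarithmic growth. Given such a $v$, the $1$-form $d_\mu v-\omega$ is closed (its differential is $\tfrac{i}{2}(\Delta_\mu v-\rho)\,dV=0$) and lies in the $+i$-eigenspace of $*_\mu$, hence equals $d\psi$ for some globally $\Delta_\mu$-harmonic $\psi$; near infinity $d\psi=O(|z|^{-2})$, so $\psi$ has a limit at $\infty$, and the maximum principle for $\Delta_\mu$ forces $\psi$ to be constant. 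Therefore $d_\mu v=\omega$, and $\Psi:=v$ is the desired solution.

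\textbf{Step 3 ($\Phi$ is a homeomorphism).} On $\{|z|>R\}$, where $\mu\equiv 0$, both $\Phi$ and $\Psi$ are holomorphic and $\Psi\to 0$, so $\partial_z\Phi=e^{\Psi}=1+O(1/z)$; single-valuedness of $\Phi$ forces $\tfrac{1}{2\pi i}\oint_{|z|=R}\partial_z\Phi\,dz=\tfrac{1}{2\pi i}\oint d\Phi=0$, which kills the logarithmic term and gives $\Phi(z)=z+c+O(1/z)$ near $\infty$, so $\Phi$ is proper. A proper local homeomorphism $\bb C\to\bb C$ is a covering map, and since the target is simply connected it is a homeomorphism; it is orientation-preserving because $J_\Phi>0$, and it is $C^1$ (indeed $C^\infty$, by elliptic regularity applied to $\Delta_\mu v=\rho$) since all the data are smooth. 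If in addition one wants $\Phi(0)=0$ and $\Phi(1)=1$, post-composing with the appropriate affine map preserves all these properties.

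The step I expect to be the genuine obstacle is the global solvability in Step 2 — producing a solution of $\Delta_\mu v=\rho$ on the noncompact surface $\bb C$ with the right decay at infinity, and then upgrading this second-order solution to an exact solution of the first-order equation $d_\mu\Psi=\omega$. The structural fact that makes it work is $\int_{\bb C}\rho\,dV=0$, which is forced by the compact support of $\mu$; once the decaying $v$ is obtained, everything else is soft.
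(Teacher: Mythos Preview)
Your overall architecture is the paper's: reduce to the inhomogeneous equation $\partial_{\bar z}\Psi-\mu\,\partial_z\Psi=\partial_z\mu$, integrate the closed form $e^{\Psi}dz+\mu e^{\Psi}d\bar z$ to obtain $\Phi$, and deduce that $\Phi$ is a homeomorphism from local injectivity and properness. The gap is precisely where you flag it. In Step~2 you assert that $\Delta_\mu v=\rho$ can be solved on all of $\bb C$ with $v(z)\to 0$ at infinity, offering only ``exhaust by large disks or build a fundamental solution''; this is the analytic heart of the theorem and is not a proof. The paper's Section~\ref{epde} supplies exactly this missing argument: it introduces the Hilbert space $\cc H$ of $L^1_{\mathrm{loc}}$ functions with $|du|\in L^2(\bb C)$ modulo additive constants, shows that the $\mu$-twisted inner product $(u,v)_\mu=\int du\wedge *_\mu d\bar v$ is equivalent to the Euclidean one (this is where $\|\mu\|_\infty<1$ is used quantitatively, via $a-|b|=(1-|\mu|)/(1+|\mu|)$), and then a weak solution of $i\,d(*_\mu df)=d\eta$ drops out in one line from the Riesz representation theorem. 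Continuity of $f$ is handled separately in Section~\ref{regularity}.

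Your passage from the second-order solution back to the first-order equation is also more roundabout than necessary. Rather than solving $\Delta_\mu v=\rho$ with decay and then arguing that the closed $(+i)$-eigenform $d_\mu v-\omega$ is the differential of a bounded $\Delta_\mu$-harmonic function which a Liouville/maximum-principle argument on the noncompact domain forces to be constant, the paper simply observes that $i*_\mu df-\eta$ is closed, integrates it to a function $g$ (Lemma~\ref{poin}), and checks by a two-line Hodge-star computation that $\Psi:=f+g$ already satisfies $d_\mu\Psi=-\tfrac12(1-i*_\mu)\eta$. No Liouville theorem, no decay rate on the solution beyond $|df|\in L^2$, and no maximum principle on an unbounded domain are needed.
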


\subsection{Suitable Hilbert Space}

Define $\cc H$ as the space of $u \in L^1_{\text{loc}}(\bb C)$ such that $|du| \in L^2(\bb C)$ with the equivalence relation that $u \sim v$ if $u-v$ is constant a.e. Note that in the following $dA$ refers to Lebesgue measure on $\bb C$.

\begin{prop}
 $\cc H$ is a Hilbert space with the inner product
\begin{align*}
([u],[v]) = \int_{\bb C} \langle du, dv\rangle  dA,
\end{align*}
where $u$ and $v$ are representatives of $[u]$ and $[v]$ respectively.
\end{prop}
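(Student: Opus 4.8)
The plan is to verify the three ingredients of a Hilbert space: that the pairing is a well-defined Hermitian form, that it is positive-definite on $\cc H$, and that $\cc H$ is complete in the induced norm.

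Well-definedness and positive-definiteness are routine. Since $d$ annihilates a.e.\ constants, $\langle du, dv\rangle$ is independent of the chosen representatives of $[u]$ and $[v]$, and the integral converges because the pointwise Cauchy--Schwarz bound $|\langle du, dv\rangle| \le |du|\,|dv|$ puts the integrand in $L^1(\bb C)$ whenever $|du|,|dv| \in L^2(\bb C)$. Sesquilinearity is immediate. Taking $u=v$ gives $([u],[u]) = \int_{\bb C} |du|^2\, dA \ge 0$, and if this vanishes then $du = 0$ a.e.; for $u \in L^1_{\text{loc}}(\bb C)$ the standard fact that a locally integrable function with vanishing distributional gradient is a.e.\ constant (proved by mollification) shows $[u]=0$. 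Hence $(\cdot,\cdot)$ is an inner product on $\cc H$ with norm $\|[u]\| = \|du\|_{L^2(\bb C)}$.

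The content of the proposition is completeness. Let $([u_n])$ be Cauchy in $\cc H$. Then $(du_n)$ is Cauchy in the space of square-integrable (complex) $1$-forms on $\bb C$, which is complete, so $du_n \to \omega$ in $L^2$ for some $1$-form $\omega$ with $|\omega| \in L^2(\bb C)$. Choose for each $n$ the representative $u_n$ normalized by $\int_{B(0,1)} u_n\, dA = 0$. I would then invoke a Poincar\'e-type inequality in the form: for every $R \ge 1$ there is $C_R < \infty$ with $\|v\|_{L^2(B(0,R))} \le C_R\, \|dv\|_{L^2(B(0,R))}$ for all $v \in \cc H$ satisfying $\int_{B(0,1)} v\, dA = 0$. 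Applying this to $v = u_n - u_m$ (still normalized) yields $\|u_n - u_m\|_{L^2(B(0,R))} \le C_R\, \|du_n - du_m\|_{L^2(\bb C)} \to 0$, so $(u_n)$ is Cauchy in $L^2$ on every ball and hence converges in $L^2_{\text{loc}}(\bb C) \subset L^1_{\text{loc}}(\bb C)$ to some function $u$. Passing to the limit in the defining identity $\int_{\bb C} u_n\, \partial_j\varphi\, dA = -\int_{\bb C} \partial_j u_n\, \varphi\, dA$ for each $\varphi \in C_c^\infty(\bb C)$ and $j=1,2$ identifies the distributional gradient of $u$ with $\omega$; in particular $|du| = |\omega| \in L^2(\bb C)$, so $[u] \in \cc H$, and $\|[u_n] - [u]\| = \|du_n - \omega\|_{L^2(\bb C)} \to 0$.

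I expect the only genuine point to be the Poincar\'e inequality with the mean taken over the fixed ball $B(0,1)$ rather than over $B(0,R)$. This reduces to the usual Poincar\'e--Wirtinger inequality on $B(0,R)$ together with the estimate $|m_R(v) - m_1(v)| \lesssim_R \|dv\|_{L^2(B(0,R))}$ for the difference of the averages $m_\rho(v) = |B(0,\rho)|^{-1}\int_{B(0,\rho)} v\, dA$, obtained by comparing both averages to the Poincar\'e--Wirtinger average along a chain of balls; everything else is bookkeeping. (Minor point: since $\|\mu\|_\infty < 1$, the pointwise norm $|\cdot|$ coming from any of the metrics in play is comparable to the Euclidean one, so $\cc H$ and the argument above are insensitive to that choice.)
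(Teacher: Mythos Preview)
Your proof is correct and follows essentially the same route as the paper: normalize representatives to have mean zero on $B(0,1)$, use completeness of $L^2$ for the gradients, apply a Poincar\'e inequality to get $L^1_{\text{loc}}$ (or $L^2_{\text{loc}}$) convergence of the $u_n$, and pass to the limit against test functions to identify $du=\omega$. Your write-up is in fact more careful than the paper's on the one nontrivial point---that the Poincar\'e constant on $B(0,R)$ can be taken with the mean fixed over $B(0,1)$ rather than $B(0,R)$---which the paper invokes without comment; your chain-of-balls reduction is the right way to close that gap.
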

\begin{proof}
If $u \sim v$ and $\phi \in \Omega^1(\bb C)$ with compact support, then
\begin{align*}
\int (du - dv)\wedge \phi = -\int (u -v) d\phi= -c\int d\phi = 0,
\end{align*}
since $\phi$ has compact support. 
Thus $du = dv$ a.e. and $([u],[v])$ is well-defined.  Linearity and symmetry up to conjugation both hold.
It is clear that $([u],[u]) \ge 0$.
Also, $([u],[u]) = 0$ if and only if $du = 0$, which holds if and only if $u$ is constant.  So $u \sim 0$.  Therefore we have positive definiteness.

$\cc H$ is complete:  Suppose $([u_k])$ is a Cauchy sequence.  By our definition, $|du_k| \in L^2(\bb C)$, so $du_k \to v$ where $|v| \in L^2(\bb C)$.  There exists a representative of $[u_k]$ such that 
\begin{align*}
\frac{1}{\pi}\int_{B(0,1)} u_k dA = 0.
\end{align*}
The functions $u_k$ are in $L^1_{\text{loc}}(\bb C)$.  So by the Poincar\'e inequality, 
\begin{align*}
\|u_m - u_n\|_{L^1(B(0,r))} \le C_r\|du_m - du_n\|_{L^2(B(0,r))}.
\end{align*}
Therefore, for a fixed $r$, there are representatives $u_k$ such that $u_k \to u_r$ in $L^1(B(0,r))$.  Since the $u_k$ do not depend on $r$, the $u_r$ must agree with each other a.e.\ where they are defined.  Therefore $u_k \to u$ in $L^1_{\text{loc}}(\bb C)$.  

For convergence we now show that $du = v$.  If $\phi$ is a smooth, compactly supported 1-form, then
\begin{align*}
\int ud\phi = \lim_{n \to \infty} \int u_n d\phi = - \lim_{n \to \infty} \int du_n \wedge \phi dA = - \int v\wedge\phi dA.
\end{align*}
So $du = v$ a.e.
This makes $\cc H$ a Hilbert space. 
\end{proof}

\begin{lemma}
Define
\begin{align*}
([v],[u])_\mu = \int_{\bb C}dv\wedge *_\mu d\bar u.
\end{align*}
for $[u],[v] \in \cc H$.  This is an inner product on $\cc H$ that makes $\cc H$ a Hilbert space.
\end{lemma}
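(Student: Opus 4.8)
The plan is to reduce the statement to a pointwise fact about the Hermitian inner product $\langle\cdot,\cdot\rangle_\mu$ on complexified $1$-forms, and then transfer completeness from the inner product $(\cdot,\cdot)$ already constructed on $\cc H$. By part (3) of Proposition \ref{Hodgeprop}, $\omega\wedge *_\mu\eta = \langle\omega,\bar\eta\rangle_\mu\,dV$ for $1$-forms $\omega,\eta$; taking $\omega = dv$, $\eta = d\bar u$ and using $\overline{d\bar u} = du$ gives
\[
([v],[u])_\mu = \int_{\bb C} dv\wedge *_\mu d\bar u = \int_{\bb C}\langle dv,du\rangle_\mu\,dV .
\]
That this is well-defined on equivalence classes is seen exactly as in the proof for $(\cdot,\cdot)$: if $u\sim u'$ and $v\sim v'$ then $du=du'$ and $dv=dv'$ a.e., so the integrand does not change. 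Sesquilinearity --- linear in $[v]$ and conjugate-linear in $[u]$, the conjugation entering through $d\bar u$ --- and the Hermitian symmetry $\overline{([v],[u])_\mu}=([u],[v])_\mu$ are then inherited directly from the pointwise Hermitian form $\langle\cdot,\cdot\rangle_\mu$.

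For positive definiteness, recall that $\langle\cdot,\cdot\rangle_\mu$ is the Hermitian extension of the inner product on $T^*\bb C$ dual to the Riemannian metric $g=|dz+\mu d\bar z|^2$, so $\langle\alpha,\alpha\rangle_\mu\ge 0$ for every complex covector $\alpha$, with equality only for $\alpha=0$. Hence $([u],[u])_\mu=\int_{\bb C}\langle du,du\rangle_\mu\,dV\ge 0$, and it vanishes iff $du=0$ a.e., i.e.\ iff $u$ is constant a.e., i.e.\ iff $[u]=0$ in $\cc H$.

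The main point, and essentially the only place where there is something to do, is completeness in the norm $\|[u]\|_\mu^2=([u],[u])_\mu$. I would deduce this from the equivalence of $\|\cdot\|_\mu$ with the norm $\|\cdot\|$ coming from $(\cdot,\cdot)$ --- completeness of $(\cc H,(\cdot,\cdot))$ was already established, and an equivalence of norms preserves Cauchy sequences and their limits, so the two completeness statements are equivalent. The estimate needed is the pointwise two-sided bound
\[
c\,\langle\alpha,\alpha\rangle\,dA \;\le\; \langle\alpha,\alpha\rangle_\mu\,dV \;\le\; C\,\langle\alpha,\alpha\rangle\,dA ,
\]
for all covectors $\alpha$ (here $\langle\cdot,\cdot\rangle$ is the Euclidean inner product on forms), with constants $0<c\le C<\infty$ depending only on $k$. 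This is verified by combining $dV=\tfrac{i}{2}(1-|\mu|^2)\,dz\wedge d\bar z$, so that $1-k^2\le dV/dA\le 1$, with the explicit form of $\langle\cdot,\cdot\rangle_\mu$ read off from the matrix $A$ of Section \ref{calc} (equivalently from $a,b$ in \eqref{a} and \eqref{b}): on the compact set $\{\nu\in\bb C:|\nu|\le k\}$ the matrix $A$ depends continuously on $\mu$, so its smallest and largest eigenvalues relative to the Euclidean metric are pinched between positive constants depending only on $k$; alternatively, the eigenvalues of $g$ relative to the Euclidean metric lie in $[(1-k)^2,(1+k)^2]$, which gives the dual estimate and controls the volume distortion. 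The upper bound moreover shows, via Cauchy--Schwarz, that the integral defining $([v],[u])_\mu$ converges absolutely once $|du|,|dv|\in L^2(\bb C)$. I expect the only obstacle to be the bookkeeping in this pointwise estimate --- in particular pinning down the lower constant $c$ --- and that no new analytic input beyond the previous proposition is required.
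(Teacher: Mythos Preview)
Your proposal is correct and follows essentially the same route as the paper: rewrite $([v],[u])_\mu$ via Proposition~\ref{Hodgeprop}(3) as $\int_{\bb C}\langle dv,du\rangle_\mu\,dV$, inherit the Hermitian properties pointwise, and deduce completeness from the already-established completeness of $(\cc H,(\cdot,\cdot))$ by proving a lower norm bound $([u],[u])_\mu\ge c\,\|u\|_{\cc H}^2$. The only difference is in how that lower bound is obtained: the paper carries out the explicit coordinate computation with $a,b$ from \eqref{a}--\eqref{b} and arrives at the sharp constant $c=\tfrac{1-k}{1+k}$, whereas you argue more softly via the eigenvalue pinching of $g$ on the compact parameter set $\{|\nu|\le k\}$ --- which, if you push it through, yields the same constant.
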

\begin{proof}
The inner product $(\cdot,\cdot)_\mu$ is well-defined, since $|du|,|dv| \in L^2(\bb C)$.
Also, by Proposition \ref{Hodgeprop} (3),
\begin{align*}
 \int_{\bb C} dv\wedge *_\mu d\bar u = \int_{\bb C} \langle dv,du\rangle_\mu dV.
\end{align*}
Since the inner product on forms is Hermitian, integrating must give a Hermitian inner product as well.  To show completeness note that
\begin{align*}
([u],[u])_\mu &= \int_{\bb C} a\bigg(\bigg |\frac{\partial u}{\partial z}\bigg |^2 + \bigg |\frac{\partial u}{\partial \bar z}\bigg |^2\bigg ) - 2\text{Re}\bigg (b\frac{\partial u}{\partial z} \frac{\partial \bar u}{\partial z}\bigg )dA,\\
\intertext{where $a$ and $b$ are from \eqref{a},\eqref{b} (see Section \ref{calc}).}
\text{Re}\bigg (b\frac{\partial u}{\partial z}\frac{\partial \bar u}{\partial z}\bigg ) &\le|b|\bigg | \frac{\partial u}{\partial z}\frac{\partial \bar u}{\partial z}\bigg | \le \frac{|b|}{2}\bigg(\bigg |\frac{\partial u}{\partial z}\bigg |^2 + \bigg |\frac{\partial u}{\partial \bar z}\bigg |^2\bigg ) . \\
\intertext{So}
([u],[u])_\mu &\ge  \int_{\bb C}a\bigg(\bigg |\frac{\partial u}{\partial z}\bigg |^2 + \bigg |\frac{\partial u}{\partial \bar z}\bigg |^2\bigg ) - |b|\bigg(\bigg |\frac{\partial u}{\partial z}\bigg |^2 + \bigg |\frac{\partial u}{\partial \bar z}\bigg |^2\bigg ) dA.\\
\intertext{and}
a - |b| &= \frac{1+|\mu|^2 - 2|\mu|}{1-|\mu|^2} = \frac{(1-|\mu|)^2}{1- |\mu|^2} = \frac{1-|\mu|}{1+|\mu|}.\\
\intertext{So}
([u],[u])_\mu &\ge \frac{1-k}{1+k}\|u\|_{\cc H}^2.
\end{align*}
Completeness with the new inner product then follows from $\cc H$ being complete. 
\end{proof}

\subsection{Inhomogeneous Beltrami Equation}

Finding a solution for $id*_\mu (du) = 0$ may be difficult.  So instead, we first try to solve an inhomogeneous version.  Suppose $F,G \in C_c^\infty (\bb C)$, and let $\eta = G dz - F d\bar z$.
\begin{prop}
There exists $f \in \cc H\cap C(\bb C)$ such that 
\begin{align}
id(*_\mu df) = d\eta
\end{align}
in distribution.
\end{prop}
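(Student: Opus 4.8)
\emph{Proof proposal.} The plan is to solve the equation variationally via the Riesz representation theorem on the Hilbert space $(\cc H,(\cdot,\cdot)_\mu)$ from the previous lemma, and then to upgrade the resulting $\cc H$-solution to a continuous function using ellipticity. First I would unwind the distributional identity: every $\phi\in C_c^\infty(\bb C)$ lies in $\cc H$, and pairing $id(*_\mu df)=d\eta$ against $\phi$ and integrating by parts (the boundary term vanishes since $\phi$ has compact support) turns the identity into
\begin{align*}
i\int_{\bb C}d\phi\wedge *_\mu df=\int_{\bb C}d\phi\wedge\eta\qquad\text{for every }\phi\in C_c^\infty(\bb C).
\end{align*}
Since $*_\mu$ has bounded smooth coefficients ($a,b$ from \eqref{a}, \eqref{b} are bounded because $\|\mu\|_\infty=k<1$), the object $*_\mu df$ is a genuine $L^2$ $1$-form, and the left side equals $i([\phi],[\bar f])_\mu$; so the identity to be solved is $([\phi],[\bar f])_\mu=-i\int_{\bb C}d\phi\wedge\eta$ for all test $\phi$.

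The key step is then Riesz representation. Define $\ell\colon\cc H\to\bb C$ by $\ell([v])=-i\int_{\bb C}dv\wedge\eta$; this is well defined on equivalence classes (adding a constant to $v$ does not change $dv$) and linear. It is bounded because $\eta=Gdz-Fd\bar z$ is smooth with compact support, so $\|\eta\|_{L^2(\bb C)}<\infty$ and, by the coercivity $([v],[v])_\mu\ge\tfrac{1-k}{1+k}\|[v]\|_{\cc H}^2$ proved in the previous lemma,
\begin{align*}
|\ell([v])|\le C\,\|[v]\|_{\cc H}\,\|\eta\|_{L^2(\bb C)}\le C\sqrt{\tfrac{1+k}{1-k}}\,\|\eta\|_{L^2(\bb C)}\,\|[v]\|_\mu .
\end{align*}
By the Riesz representation theorem there is a unique $[w]\in\cc H$ with $([v],[w])_\mu=\ell([v])$ for all $[v]\in\cc H$. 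Setting $f\coloneqq\bar w$ — which again lies in $\cc H$ since $d\bar w=\overline{dw}\in L^2(\bb C)$ — and specializing $[v]=[\phi]$ gives $i\int d\phi\wedge *_\mu df=\int d\phi\wedge\eta$ for every $\phi\in C_c^\infty(\bb C)$, i.e.\ $f\in\cc H$ solves $id(*_\mu df)=d\eta$ in the sense of distributions.

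It remains to prove $f\in C(\bb C)$. Applying $*_\mu$ to the equation rewrites it as $-i\,\Delta_\mu f=*_\mu d\eta$, where $\Delta_\mu f=-*_\mu d*_\mu df$ is a second-order operator that is uniformly elliptic with smooth coefficients — uniform ellipticity from $|\mu|\le k<1$, smoothness from $\mu\in C_c^\infty(\bb C)$, and outside a large ball $\Delta_\mu$ is simply the Laplacian — while $*_\mu d\eta$ is a smooth, compactly supported function. Since $f\in W^{1,2}_{\text{loc}}(\bb C)$ is a distributional solution, interior elliptic regularity (see \cite[Ch. 6]{trudinger}; this is also reproved by the methods of Section \ref{regularity}) yields $f\in C^\infty(\bb C)$; alternatively one can first get $f\in W^{2,2}_{\text{loc}}(\bb C)$ via difference quotients and then invoke the two-dimensional Sobolev embedding $W^{2,2}_{\text{loc}}(\bb R^2)\hookrightarrow C(\bb R^2)$. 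Either way $f\in\cc H\cap C(\bb C)$, as required.

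I expect the continuity step to be the main obstacle. In two dimensions $W^{1,2}\not\hookrightarrow C^0$, so the Riesz solution is a priori only in $\cc H$, and promoting it to a continuous function genuinely forces one to use the equation and the ellipticity of $\Delta_\mu$ rather than any soft embedding. A secondary technical point is keeping the conjugations and signs straight so that the distributional identity matches the Hermitian form $(\cdot,\cdot)_\mu$, and checking that testing against $C_c^\infty(\bb C)\subset\cc H$ suffices to pin down the $2$-current $d(*_\mu df)$.
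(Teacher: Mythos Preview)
Your proposal is correct and is essentially the paper's own argument: both apply the Riesz representation theorem on $(\cc H,(\cdot,\cdot)_\mu)$ to the linear functional $[v]\mapsto -i\int_{\bb C}dv\wedge\eta$ (the paper writes it equivalently as $i\int_{\bb C}v\wedge d\eta$), read off the distributional identity $id(*_\mu df)=d\eta$, and then cite elliptic regularity (\cite[Ch.~6]{trudinger} or Section~\ref{regularity}) for the continuity of $f$. Your extra care with the conjugation $f=\bar w$ and the remark that $C_c^\infty(\bb C)$ is dense enough to determine the $2$-current are accurate refinements of the same line of reasoning.
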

\begin{proof}
Define, for $[v] \in \cc H$,
\begin{align*}
L([v]) \coloneqq  i\int_{\bb C} v\wedge d\eta.
\end{align*}
Let $c \in \bb C$, if $v$ and $v+c$ are two representatives of $[v]$, then
\begin{align*}
L(v) - L(v+c) = i\int_{\bb C} cd\eta = 0,
\end{align*}
since $\eta$ has compact support.  So $L$ is well-defined.
We need to show that $L$ is a continuous linear functional in the norm defined by $\mu$. Suppose $v \in C_c^\infty(\bb C)$, then
\begin{align*}
|L(v)|^2 = \bigg |  \int_{\bb C} vd\eta\bigg |^2 = \bigg |  \int_{\bb C} dv\wedge \eta\bigg |^2 &\le \|v\|_{\cc H}^2 \int_{\bb C} |F|^2 + |G|^2 dA\\
&\le K\|v\|_{\mu}^2 \int_{\bb C} |F|^2 + |G|^2 dA
\end{align*}
by the lemma.
The functional $L$ is continuous on a dense subset of $\cc H$ and by taking limits it is continuous on $\cc H$.
By the Riesz representation theorem there exists a function $f \in \cc H$ such that $L(v) = (v,\bar f)_\mu$ for all $v \in \cc H$.  So for all $v \in C_c^\infty(\bb C)$,
\begin{align*}
\int_{\bb C} dv\wedge (*_\mu df) = i\int_{\bb C}v\wedge d\eta.
\end{align*}
Applying integration by parts to the left hand side, we get that
\begin{align*}
\int_{\bb C} v\wedge (d*_\mu df) = -i\int_{\bb C}v\wedge d\eta.
\end{align*}
So in distribution, $id(*_\mu df) = d\eta$ and 
$f$ solves the inhomogeneous Laplace equation. 
By elliptic regularity, if $\mu \in C^\infty (\bb C)$, then $f \in C(\bb C)$ (see \cite[Ch. 6]{trudinger}).  A proof for the continuity of $f$ is also given in Section \ref{regularity} below.
\end{proof}
We would like to apply the Poincar\'e lemma to get a solution to the Beltrami equation from $f$.  To do this, we need to prove a version of the Poincar\'e lemma for $L^2$-functions.
\begin{lemma}\label{poin}
Suppose $u,v \in L^2(\bb C)$ and $\partial_{\bar z}u = \partial_z v$ in distribution.  Then there exists a function $g \in L^2_{\text{loc}}(\bb C)$ such that $\partial_z g = u$ and $\partial_{\bar z} g = v$.
\end{lemma}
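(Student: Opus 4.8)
The plan is to reduce to the classical Poincar\'e lemma by mollification and then recover an $L^2_{\text{loc}}$ primitive by passing to a limit with the help of a Poincar\'e inequality. Introduce the $1$-form $\omega = u\,dz + v\,d\bar z$, which has $L^2(\bb C)$ coefficients; the hypothesis $\partial_{\bar z}u = \partial_z v$ in distribution says precisely that $d\omega = 0$ in the distributional sense, and producing the desired $g$ amounts to writing $\omega = dg$. Let $\rho_\epsilon$ be a standard mollifier and set $u_\epsilon = u * \rho_\epsilon$, $v_\epsilon = v * \rho_\epsilon$. These are smooth, converge to $u$ and $v$ in $L^2(\bb C)$ respectively, and satisfy $\partial_{\bar z}u_\epsilon = (\partial_{\bar z}u)*\rho_\epsilon = (\partial_z v)*\rho_\epsilon = \partial_z v_\epsilon$ as classical derivatives; equivalently $\omega_\epsilon \coloneqq u_\epsilon\,dz + v_\epsilon\,d\bar z$ is a closed smooth $1$-form on $\bb C$, since $d\omega_\epsilon = (\partial_z v_\epsilon - \partial_{\bar z}u_\epsilon)\,dz\wedge d\bar z = 0$.

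Since $\bb C = \bb R^2$ is star-shaped, the classical Poincar\'e lemma provides smooth functions $g_\epsilon \colon \bb C \to \bb C$ with $dg_\epsilon = \omega_\epsilon$, that is $\partial_z g_\epsilon = u_\epsilon$ and $\partial_{\bar z}g_\epsilon = v_\epsilon$. Normalize each $g_\epsilon$ by subtracting a constant so that $\int_{B(0,1)} g_\epsilon\,dA = 0$. I then want to show that $(g_\epsilon)$ is Cauchy in $L^2_{\text{loc}}(\bb C)$. For this I use the same Poincar\'e-type inequality that appears in the completeness proof for $\cc H$: for each $R \ge 1$ there is a constant $C_R$ such that $\|h\|_{L^2(B(0,R))} \le C_R\|dh\|_{L^2(B(0,R))}$ for every $h \in W^{1,2}(B(0,R))$ with $\int_{B(0,1)} h\,dA = 0$, a statement provable by a standard Rellich compactness argument. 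Applying this to $h = g_\epsilon - g_{\epsilon'}$, which is still normalized, gives $\|g_\epsilon - g_{\epsilon'}\|_{L^2(B(0,R))} \le C_R\big(\|u_\epsilon - u_{\epsilon'}\|_{L^2(\bb C)} + \|v_\epsilon - v_{\epsilon'}\|_{L^2(\bb C)}\big) \to 0$ as $\epsilon,\epsilon' \to 0$. Hence $g_\epsilon \to g$ in $L^2(B(0,R))$ for every $R$, so $g \in L^2_{\text{loc}}(\bb C)$ is well defined.

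Finally I verify $\partial_z g = u$ and $\partial_{\bar z}g = v$ in distribution. For $\phi \in C_c^\infty(\bb C)$, integration by parts for the smooth $g_\epsilon$ yields $\int_{\bb C} g_\epsilon\,\partial_z\phi\,dA = -\int_{\bb C} u_\epsilon\,\phi\,dA$; letting $\epsilon \to 0$ and using that $g_\epsilon \to g$ and $u_\epsilon \to u$ in $L^2_{\text{loc}}(\bb C)$ while $\phi$ has compact support gives $\int_{\bb C} g\,\partial_z\phi\,dA = -\int_{\bb C} u\,\phi\,dA$, which is $\partial_z g = u$; the identity $\partial_{\bar z}g = v$ follows in the same way. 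I expect the only point requiring genuine care to be the Poincar\'e inequality with the normalization imposed on the fixed ball $B(0,1)$ rather than on $B(0,R)$ itself; it is precisely this $R$-independent choice of normalization that forces a single limit function $g$ to work simultaneously on all balls. Everything else is routine.
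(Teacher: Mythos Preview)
Your proof is correct and follows essentially the same mollify--normalize--Poincar\'e-inequality scheme as the paper. The only difference is that you show $(g_\epsilon)$ is Cauchy in $L^2_{\text{loc}}$ directly, yielding strong convergence of the full family, whereas the paper merely bounds $\|g_\epsilon\|_{L^2(B(0,R))}$ and invokes Banach--Alaoglu to extract a weakly convergent subsequence; your variant is slightly cleaner and avoids the subsequence.
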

\begin{proof}
Let $\varphi_\epsilon$ be an approximation to the identity (i.e.\ $\varphi_1$ has compact support, $\int_{\bb C}\varphi_1 dA = 1$, and $\varphi_\epsilon = \epsilon^{-n}\varphi(\cdot/\epsilon) \to \delta$ in distribution as $\epsilon \to 0$, where $\delta$ is the Dirac measure).  Then $u_\epsilon : = u*\varphi_\epsilon$ and $v_\epsilon \coloneqq  v*\varphi_\epsilon \in C^\infty$ converge to $u$ and $v$ respectively in $L^2(\bb C)$.

By the Poincar\'e lemma in the smooth case, there exists a function $g_\epsilon$ such that $\partial_zg_\epsilon = u_\epsilon$ and $\partial_{\bar z} g_\epsilon = v_\epsilon$.  We can normalize the $g_\epsilon$ so that
\begin{align*}
\int_{B(0,1)} g_\epsilon dA = 0.
\end{align*}
By the Poincar\'e inequality, $g_\epsilon \in L^2_{\text{loc}}(\bb C)$ with bounded $L^2$-norm on any fixed ball.  The Banach-Alaoglu theorem implies that there exists a subsequence $(g_{\epsilon_n})$ that converges to $g$ in $L^2_{\text{loc}}(\bb C)$.  It then can be seen that $\partial_z g = u$ and $\partial_{\bar z} g = b$ by integrating against test functions and passing to the limit.
\end{proof}
The proposition and the lemma give the following corollary,
\begin{corollary}
There exists $\Psi \in C(\bb C)$ such that 
\begin{align*}
d_\mu\Psi = -\frac{1}{2}(1 - i*_\mu)\eta.
\end{align*}
\end{corollary}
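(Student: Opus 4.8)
The plan is to build $\Psi$ directly from the continuous function $f$ supplied by the preceding Proposition, together with the $L^2$-Poincar\'e lemma (Lemma \ref{poin}). Unwinding $d_\mu=\tfrac12(1-i*_\mu)d$, the identity to be proved, $d_\mu\Psi=-\tfrac12(1-i*_\mu)\eta$, is equivalent to $(1-i*_\mu)(d\Psi+\eta)=0$; that is, $d\Psi+\eta$ should lie in the kernel of $1-i*_\mu$ on $1$-forms, i.e.\ in the $(-i)$-eigenspace of $*_\mu$ (recall $*_\mu^2=-1$ on $\Omega^1(\bb C)$). The natural candidate for such an eigenform whose exterior derivative matches $d\eta$ is the projection of $df$,
\[
\omega \coloneqq (1+i*_\mu)\,df .
\]

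First I would record the two facts about $\omega$ that make the argument run. Since $f\in\cc H$ we have $df\in L^2(\bb C)$, and since $*_\mu$ acts pointwise and $\bb C$-linearly with norm controlled by $k<1$, the $1$-form $\omega$ has $L^2(\bb C)$ coefficients; as $\eta=G\,dz-F\,d\bar z$ is smooth with compact support, so does $\omega-\eta$. Moreover, using $d(df)=0$ and the Proposition, $d\omega=i\,d(*_\mu df)=d\eta$ in distribution, so $\omega-\eta$ is a closed $1$-form with $L^2$ coefficients. Writing $\omega-\eta=u\,dz+v\,d\bar z$, closedness is exactly $\partial_{\bar z}u=\partial_z v$, so Lemma \ref{poin} produces $\Psi\in L^2_{\text{loc}}(\bb C)$ with $d\Psi=\omega-\eta$.

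The equation for $\Psi$ is then immediate from the pointwise identity $(1-i*_\mu)(1+i*_\mu)=1-(i*_\mu)^2=0$ on $1$-forms:
\[
d_\mu\Psi=\tfrac12(1-i*_\mu)\bigl((1+i*_\mu)df-\eta\bigr)=-\tfrac12(1-i*_\mu)\eta .
\]
For the continuity of $\Psi$ I would \emph{not} invoke a Sobolev embedding (in dimension two $W^{1,2}$ does not embed in $C^0$), but instead observe that $\Psi$ solves a uniformly elliptic equation with smooth, compactly supported data: since $*_\mu\omega=*_\mu df-i\,df$, one gets $d(*_\mu d\Psi)=d(*_\mu df)-d(*_\mu\eta)=-i\,d\eta-d(*_\mu\eta)$ in distribution, hence $\Delta_\mu\Psi=-*_\mu d(*_\mu d\Psi)=i*_\mu d\eta+*_\mu d(*_\mu\eta)$, a smooth function with compact support. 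Elliptic regularity (\cite[Ch.~6]{trudinger}, or the argument in Section \ref{regularity}) then gives $\Psi\in C^\infty(\bb C)\subset C(\bb C)$; alternatively one can bootstrap only the correction $\Psi-f$, which satisfies $\Delta_\mu(\Psi-f)=*_\mu d(*_\mu\eta)$, using that $f$ is already known to be continuous.

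The main obstacle is not conceptual but the bookkeeping forced by the low regularity of $df$: one must pass everything through distributions, apply the $L^2$ version of the Poincar\'e lemma rather than the smooth one, and recover continuity from the elliptic structure of the equation $\Psi$ satisfies rather than from the mere $L^2$-integrability of $d\Psi$. Given the Proposition and Lemma \ref{poin}, the remainder of the proof is the algebra of the $\pm i$-eigenspace decomposition of $*_\mu$.
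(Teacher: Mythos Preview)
Your proof is correct and essentially identical to the paper's: your $\Psi$ with $d\Psi=(1+i*_\mu)df-\eta$ is exactly the paper's $\Psi=f+g$, where $g$ is obtained from Lemma~\ref{poin} applied to $\omega=i*_\mu df-\eta$, and your algebraic verification via $(1-i*_\mu)(1+i*_\mu)=0$ is just a cleaner packaging of the paper's two-line computation with $*_\mu dg$ and $*_\mu df$. Your treatment of the continuity of $\Psi$ is in fact more explicit than the paper's corollary proof, which asserts $\Psi\in C(\bb C)$ but does not justify it beyond the continuity of $f$; your observation that $\Psi$ (or $\Psi-f$) satisfies an elliptic equation with smooth compactly supported right-hand side fills this gap correctly.
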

\begin{proof}
We would like to construct a function that solves the inhomogeneous Beltrami equation.
Define
\begin{align*}
\omega \coloneqq i*_\mu (df) - \eta.
\end{align*}
By this definition, $\omega$ is closed and therefore exact on $\bb C$. Thus, there exists a function $g$, by Lemma $\ref{poin}$, such that 
\begin{align}
dg = \omega &= i*_\mu df-\eta.\nonumber\\ 
\intertext{Now we can do a similar calculation as in Section \ref{be}.}
*_\mu (dg) &= i*_\mu *_\mu (df) -*_\mu \eta = -i(df)-*_\mu \eta\nonumber\\
\intertext{and}
*_\mu (df) &=-idg -i\eta.\nonumber\\
\intertext{Adding these equations gives,}
*_\mu (df+dg) &= -i(df+dg) -(*_\mu +i)\eta,\nonumber\\
\intertext{which rearranges to}
d_\mu(f+g) &= -\frac{1}{2}(1 - i*_\mu)\eta. \label{output}
\end{align}
\end{proof}
If $\eta = (\partial_z \mu )d\bar z$ and $\Psi = f+g$, then \eqref{output} can be rewritten as
\begin{align*}
\partial_{\bar z} \Psi-  \mu\partial_z \Psi  =\partial_z \mu.
\end{align*}
To prove the theorem above, we need a solution to the homogeneous Beltrami equation.  Suppose $\alpha = udz + vd\bar z$ is closed and $\mu u = v$.  Then
\begin{align*}
\partial_{\bar z} u &=\partial_z v 
= \partial_z (\mu u)\\ &= (\partial_z \mu) u + \mu(\partial_z u).
\end{align*}
Dividing by $u$, gives
\begin{align*}
\partial_{\bar z} (\log u) = \partial_z \mu + \mu\partial_z (\log u).
\end{align*}
This is the inhomogeneous Beltrami equation.  Define $\alpha = e^\Psi d z + \mu e^\Psi d \bar z$, then
$\alpha$ is closed.  The Poincar\'e lemma still applies if $e^\Psi$ and $\mu e^\Psi \in C(\bb C) \cap W^{1,1}_{\text loc}(\bb C)$.
So there exists a function $\Phi \in C^1(\bb C)$ with $d\Phi = \alpha$.  This gives that $\partial_{\bar z} \Phi = \mu \partial_z \Phi$.
We also have that $\partial_z \Phi = e^\Psi \ne 0$. So $\Phi$ is locally injective.  
Since $\mu$ has compact support, $\Psi$ and $\Phi$ are holomorphic for large $z$.  The power series for $\Psi$ near $\infty$ is
\begin{align*}
\sum_{n = 0}^\infty a_nz^{-n} + \sum_{n=1}^\infty b_n z^n.
\end{align*}
Also, we have that $\partial_z \Psi = 2\partial_z f$ when $\mu = 0$.  The function $f$ is in $\cc H$ so $\partial_z \Psi \in L^2(\bb C)$. Therefore, $b_n = 0$ for all $n \in \bb N$ and
\begin{align*}
\Psi(z) = \sum_{n=0}^\infty a_nz^{-n}.
\end{align*}
So
\begin{align*}
\lim_{z \to \infty} |\Phi'(z)| = \lim_{z \to \infty} |e^{\Psi(z)}| = |e^{a_0}| \ne 0.
\end{align*}
 This means that $\lim_{z \to \infty} |\Phi(z)| = \infty$ and that $\Phi$ is a locally injective and proper map on $\bb C$.  This is enough to say that $\Phi$ is a homeomorphism.  $\Phi$ is orientation-preserving because the Jacobian of $\Phi$, $J_\Phi = |\partial_z \Phi|^2(1- |\mu|^2) > 0$.  This proves Theorem \ref{mainthm2}.  We now proceed to the main result.

\begin{proof}[Proof of Theorem \ref{mainthm}]
Recall that $\mu \in L^\infty(\mathbb C)$ and $\operatorname{supp}(\mu) \subset B(0,R)$.  Let $(\mu_n) \in C_c^\infty(\bb C)$ such that $\operatorname{supp}(\mu_n)\subset B(0,R)$ for all $n \in \bb N$, $\|\mu_n\|_\infty \le \|\mu\|_\infty$ and $\mu_n \to \mu$ a.e.  By Theorem \ref{mainthm2}, there exists a sequence $(\Phi^n)$ such that $\Phi^n$ solves the Beltrami equation for $\mu_n$.
The $\Phi^n$ are $K$-quasiconformal for a fixed $K$ and therefore form a normal family (see \cite{ahlfors} or \cite{aim}).  So there exists a subsequence, which we will still call $(\Phi^n)$, that converges locally uniformly to $\Phi$.  The limit of $K$-quasiconformal maps is $K$-quasiconformal or constant.  Near $\infty$ the $\Phi^n$ are all holomorphic and converge locally uniformly.  So $\Phi$ is holomorphic near $\infty$.  If we normalize as above to have $\partial_z \Phi^n(z) - 1 \in L^2(\bb C)$, then
\begin{align*}
\Phi^n(z) = z + b_n + \cc O(\frac{1}{z})
\end{align*}
near $\infty$.  Therefore,
\begin{align*}
\Phi(z) = z + b + \cc O(\frac{1}{z})
\end{align*}
and $\partial_z \Phi - 1 \in L^2(\bb C)$.  This also shows that $\Phi$ is not constant and must be a $K$-quasiconformal map.

Let $\varphi \in C_c^\infty(\bb C)$,
\begin{align*}
\int \varphi_z \Phi dA = \lim_{n \to \infty} \int \varphi_z \Phi^n dA= -\lim_{n \to \infty} \int \varphi \Phi^n_zdA.
\end{align*}
And
\begin{align*}
\int \varphi_{\bar z} \Phi dA= \lim_{n \to \infty} \int \varphi_{\bar z} \Phi^n dA= -\lim_{n \to \infty} \int \varphi \Phi^n_{\bar z}dA.
\end{align*}
So $\Phi_z^n \to \Phi_z$ and $\Phi_{\bar z}^n \to \Phi_{\bar z}$ weakly. 
The weak derivatives of $\Phi$ are in $L^2_{\text{loc}}(\bb C)$ and $\Phi \in W^{1,2}_{\text{loc}}(\bb C)$.

Also $\Phi$ solves the correct Beltrami equation:
\begin{align*}
\int \varphi \mu^n \Phi_z^n - \varphi \mu \Phi_z = \int \varphi(\mu_n - \mu)\Phi_z^n + \int \varphi\mu(\Phi_z^n - \Phi_z).
\end{align*}
The first integral can be bounded:
\begin{align*}
\bigg | \int \varphi(\mu_n - \mu)\Phi_z^n \bigg | \le \|\sqrt{|\varphi|}\Phi_z^n\|_2\|\sqrt{|\varphi|}(\mu_n-\mu)\|_2 \to 0,
\end{align*}
since $\mu_n \to \mu$ pointwise and $\Phi_z^n$ is bounded in $L^2(\bb C)$.
The second integral converges to $0$ as well, because $\mu$ is bounded and $\Phi_z^n \to \Phi_z$ weakly.
This means that $\mu^n \Phi_z^n \to \Phi_{\bar z}$ weakly and $\mu^n \Phi_z^n \to \mu \Phi_z$ weakly as $n \to \infty$. So $\Phi$ solves the Beltrami equation for $\mu$.

All that is left to show is that our solution $\Phi$ is unique when $\Phi(0) = 0$, and $\Phi(1) = 1$.
Let $\widetilde \Phi$ satisfy \eqref{beq} and have the above normalization.  Then $\widetilde \Phi \circ \Phi^{-1}$ is a homeomorphism. It is necessary to show that $\widetilde \Phi \circ \Phi^{-1}$ is quasiconformal.  This is true since the composition of quasiconformal maps is still quasiconformal.  Finally, we need that the Beltrami coefficient is $0$.  Quasiconformal maps satisfy the chain rule so this is true (see \cite[Ch. 2]{ahlfors}).

So $\widetilde \Phi \circ \Phi^{-1}$ is conformal from $\bb C$ to itself.  The above normalizations make $\widetilde \Phi \circ \Phi^{-1} (z)  = z$ and $\widetilde \Phi = \Phi$.  This finishes the  proof of Theorem \ref{mainthm}.
\end{proof}

\section{Regularity}\label{regularity}
In this section we show that our solution to \eqref{beq} is H\"older continuous.  As mentioned above this follows from elliptic regularity.  We have decided to include a different proof so that the paper is self-contained and since the reader may find the method interesting.  The proof is similar in nature to the classical proof for elliptic regularity but has been simplified to fit our problem.

\subsection{Preliminary Statements}
\begin{lemma}\label{kernelcont}
Let $u \in L^p_{\text{loc}}(\bb C)$ for $p > 2$ and let $K \colon \bb C \times \bb C \to \bb C$ satisfy
\begin{align}
|K(w,z)| &\le C\frac{1}{|w-z|} \label{kboundi}\\
\intertext{and}
|K(w_1,z) - K(w_2,z)| &\le C'|w_1-w_2|\bigg (\frac{1}{|w_1-z|^2} + \frac{1}{|w_2-z|^2} \bigg ). \label{kboundii}\\
\end{align}
Also assume $\operatorname{supp}(K(w,\cdot))$ is compact.
Then 
\begin{align*}
v(w) = \int_{\bb C}u(z)K(w,z)dz
\end{align*}
is H\"older-continuous.
\end{lemma}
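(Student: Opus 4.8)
The plan is first to verify that $v$ is well defined and locally bounded, and then to prove the modulus-of-continuity estimate $|v(w_1)-v(w_2)|\le C\|u\|_{L^p}\,|w_1-w_2|^{1-2/p}$ by the classical device of splitting the domain of integration into a ``near'' piece centred at $w_1$ and a ``far'' piece, bounding the former with \eqref{kboundi} and the latter with the difference bound \eqref{kboundii}. The resulting H\"older exponent $1-2/p$ is the same one that governs the Cauchy transform.

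Fix the compact set on which we want continuity; since the supports $\operatorname{supp}K(w,\cdot)$ are compact (and, as in the application, lie in one fixed ball), choose a bounded set $B$ containing all of them, so that every integral below is really over $B$. Let $q\in(1,2)$ be the exponent conjugate to $p$ — the condition $q<2$ is exactly $p>2$. By \eqref{kboundi} and H\"older's inequality, for $w$ in the compact set,
\[
|v(w)|\le C\int_{B}\frac{|u(z)|}{|w-z|}\,dA(z)\le C\|u\|_{L^p(B)}\Big(\int_{|w-z|\le D}|w-z|^{-q}\,dA(z)\Big)^{1/q},
\]
where $D$ bounds $|w-z|$ for $z\in B$; since $q<2$ this last integral is a constant times $D^{2-q}<\infty$, so $v$ is finite and locally bounded.

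Now take $w_1,w_2$ in the compact set, put $\delta=|w_1-w_2|$ (small), and split
\[
v(w_1)-v(w_2)=\int_{B}u(z)\big(K(w_1,z)-K(w_2,z)\big)\,dA(z)=I_{\mathrm{near}}+I_{\mathrm{far}},
\]
the two pieces being the integrals over $N:=B(w_1,2\delta)$ and over $B\setminus N$. For $I_{\mathrm{near}}$ I bound $K(w_1,\cdot)$ and $K(w_2,\cdot)$ separately by \eqref{kboundi}; since $N\subseteq B(w_1,2\delta)\cap B(w_2,3\delta)$, H\"older's inequality with $\int_{B(w_i,3\delta)}|w_i-z|^{-q}\,dA\lesssim\delta^{2-q}$ gives $|I_{\mathrm{near}}|\lesssim\|u\|_{L^p(B)}\,\delta^{(2-q)/q}$, and $(2-q)/q=1-2/p$. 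For $I_{\mathrm{far}}$, on $B\setminus N$ one has $|w_1-z|\ge2\delta$, hence $|w_2-z|\ge\tfrac12|w_1-z|\ge\delta$; using \eqref{kboundii} and $|w_2-z|^{-2}\le4|w_1-z|^{-2}$,
\[
|I_{\mathrm{far}}|\le 5C'\delta\int_{\{|w_1-z|\ge2\delta\}\cap B}\frac{|u(z)|}{|w_1-z|^{2}}\,dA(z)\le 5C'\delta\,\|u\|_{L^p(B)}\Big(\int_{2\delta\le|w_1-z|\le D}|w_1-z|^{-2q}\,dA(z)\Big)^{1/q}.
\]
Here $2q>2$, so $|w_1-z|^{-2q}$ fails to be locally integrable, but the cutoff at radius $2\delta$ makes the integral finite and of size $\lesssim\delta^{2-2q}$, whence $|I_{\mathrm{far}}|\lesssim\delta\,\|u\|_{L^p(B)}\,\delta^{(2-2q)/q}=\|u\|_{L^p(B)}\,\delta^{2/q-1}=\|u\|_{L^p(B)}\,\delta^{1-2/p}$. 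Adding the two bounds gives $|v(w_1)-v(w_2)|\le C''\|u\|_{L^p(B)}\,|w_1-w_2|^{1-2/p}$, i.e.\ $v$ is locally H\"older continuous of exponent $1-2/p>0$.

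The only delicate point is that the near and far estimates must yield the same power of $\delta$: this forces the splitting radius to be comparable to $\delta$ and, in the far region, hinges on the cancellation between the gain $\delta$ furnished by \eqref{kboundii} and the negative power $\delta^{(2-2q)/q}$ produced by integrating the non-locally-integrable kernel $|w_1-z|^{-2q}$ over $\{|w_1-z|\ge2\delta\}$. The hypothesis $p>2$ is used precisely through $q<2$, which is what makes the well-definedness integral and the near-region integral converge.
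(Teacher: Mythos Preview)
Your proof is correct and follows essentially the same route as the paper: you split at radius $2|w_1-w_2|$, use \eqref{kboundi} with H\"older on the near piece and \eqref{kboundii} with H\"older on the far piece, obtaining the same H\"older exponent $1-2/p$. Your treatment of the far region (the inequality $|w_2-z|\ge\tfrac12|w_1-z|$) is in fact cleaner than the paper's corresponding step, and the added well-definedness paragraph is a welcome clarification.
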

\begin{proof}
\begin{align*}
|v(w_1) - v(w_2)| &\le \int_{\bb C} |u(z)||K(w_1,z) - K(w_2,z)| dz \\
&= \int_{B(w_1,R)} |u(z)||K(w_1,z) - K(w_2,z)| dz +  \int_{\bb C\setminus B(w_1,R)} |u(z)||K(w_1,z) - K(w_2,z)| dz,
\end{align*}
where $R = 2|w_1 - w_2|$.  Then, by H\"older's inequality, the first term,
\begin{align*}
\int_{B(w_1,R)} |u(z)||K(w_1,z) - K(w_2,z)| dz \le \|u\mathbbm{1}_{\text{supp(K)}}\|_p\|K(w_1,z) - K(w_2,z)\|_q,
\end{align*}
where $1/p+1/q = 1$.  By \eqref{kboundi},
\begin{align*}
\bigg (\int_{B(w_1,R)} |K(w_1,z)&-K(w_2,z)|^q dz \bigg )^{1/q} \\ 
&\le C\bigg (\int_{B(w_1,R)}\frac{1}{|w_1-z|^q} dz\bigg )^{1/q}+C\bigg (\int_{B(w_1,R)} \frac{1}{|w_2-z|^q} dz\bigg )^{1/q}.\\
\intertext{Since $w_1,w_2 \in B(w_1,R)$, we get that}
&\lesssim CR^{(2-q)/q}.
\end{align*}
Analyzing the second term, by \eqref{kboundii}, we get that
\begin{align*}
\int_{\bb C\setminus B(w_1,R)} |u(z)||K(w_1,z) &- K(w_2,z)| dz\\& \le \frac{C'R}{2}\int_{\bb C\setminus B(w_1,R)} |u(z)|\bigg (\frac{1}{|w_1-z|^2} + \frac{1}{|w_2-z|^2} \bigg )dz.
\end{align*}
Then, by H\"older's inequality, this is bounded above by
\begin{align*}
\frac{C'R}{2}\|u\mathbbm{1}_{\operatorname{supp}(K)}\|_p\bigg (\int_{\bb C \setminus B(w_1,R)}\bigg (\frac{1}{|w_1-z|^2} + \frac{1}{|w_2-z|^2} \bigg )^q dz\bigg )^{1/q}.
\end{align*}
Note however, that
\begin{align*}
d(w_2,\partial B(w_1,R)) \ge |w_1-w_2| \ge |w_1 - z|
\end{align*}
for $z \notin B(w_1,R)$.  So
\begin{align*}
\int_{\bb C \setminus B(w_1,R)}\bigg (\frac{1}{|w_1-z|^2} + \frac{1}{|w_2-z|^2} \bigg )^qdz &\le \int_{\bb C \setminus B(w_1,R)} \frac{1}{|w_1-z|^{2q}} dz\\
&=\frac{R^{2-2q}}{2-2q}.
\end{align*}
Putting all of these bounds together we get that
\begin{align*}
|v(w_1) - v(w_2)| \lesssim \|u\mathbbm{1}_{\operatorname{supp}(K)}\|_p(R^{(2-q)/q} + R^{(2-q)/q}) = 2\|u\mathbbm{1}_{\operatorname{supp}(K)}\|_p|w_1 - w_2|^{1-2/p},
\end{align*}
where the constants depend only on $C,C'$.  Since $p > 2$ we see that $v$ is H\"older-continuous.
\end{proof}
\begin{lemma}\label{riesz}
If $u \in L^p_{\text{loc}}(\bb C)$, then for fixed $R > 0$,
\begin{align*}
I_1(|u|)(w) = \int_{B(0,R)}\frac{|u(z)|}{|w-z|}dz
\end{align*}
is in $L^{2p/(2-p)}$.
\end{lemma}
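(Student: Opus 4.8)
The plan is to recognize this as the Hardy--Littlewood--Sobolev inequality for the Riesz potential $I_1$ on $\bb R^2\cong\bb C$ and to prove it by Hedberg's splitting trick. First I would observe that the natural range of the hypothesis is $1<p<2$: for $p>2$ the exponent $q:=2p/(2-p)$ is negative, and at $p=1$ the conclusion fails (one only gets weak type, see the last paragraph), so assume $1<p<2$. Then set $f:=|u|\mathbbm{1}_{B(0,R)}$, so that $f\ge 0$, $f\in L^p(\bb C)$ with $\|f\|_p=\|u\|_{L^p(B(0,R))}$, and $I_1(|u|)(w)=\int_{\bb C}\frac{f(z)}{|w-z|}\,dz=(f*|\cdot|^{-1})(w)$. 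The statement then reduces to $\|f*|\cdot|^{-1}\|_{L^q(\bb C)}\le C_p\|f\|_{L^p(\bb C)}$.

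For the pointwise estimate I would fix $w\in\bb C$ and a scale $\delta>0$ and split
\begin{align*}
I_1(|u|)(w) &= \int_{|w-z|<\delta}\frac{f(z)}{|w-z|}\,dz + \int_{|w-z|\ge\delta}\frac{f(z)}{|w-z|}\,dz \\
&=: A_\delta(w)+B_\delta(w).
\end{align*}
The near part $A_\delta(w)$ I would decompose over dyadic annuli $2^{-k-1}\delta\le|w-z|<2^{-k}\delta$ and bound each piece using the Hardy--Littlewood maximal function $Mf(w)=\sup_{r>0}(\pi r^2)^{-1}\int_{B(w,r)}f$; this gives $A_\delta(w)\lesssim\sum_{k\ge0}(2^{-k}\delta)^{-1}(2^{-k}\delta)^2 Mf(w)\lesssim\delta\,Mf(w)$. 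For the far part, H\"older's inequality with conjugate exponent $p'=p/(p-1)$ gives $B_\delta(w)\le\|f\|_p\big(\int_{|z|\ge\delta}|z|^{-p'}\,dz\big)^{1/p'}$; this is exactly where $p<2$ is used, since $p'>2$ makes the integral converge, and it equals $C_p\,\delta^{2/p'-1}=C_p\,\delta^{1-2/p}$. Choosing $\delta$ to balance the two terms, namely $\delta\sim(\|f\|_p/Mf(w))^{p/2}$, yields the pointwise bound $I_1(|u|)(w)\le C_p\|f\|_p^{p/2}\,Mf(w)^{1-p/2}$ (the cases $Mf(w)\in\{0,\infty\}$ being trivial).

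Next I would raise this inequality to the power $q$ and integrate. Since $q$ was chosen so that $(1-p/2)q=p$, this gives $\|I_1(|u|)\|_q^q\le C_p^q\|f\|_p^{pq/2}\int_{\bb C}Mf(w)^p\,dw=C_p^q\|f\|_p^{pq/2}\|Mf\|_p^p$. Invoking the Hardy--Littlewood maximal inequality $\|Mf\|_p\le A_p\|f\|_p$, valid because $p>1$ (see \cite{stein}), and using that $\tfrac{pq}{2}+p=\tfrac{p}{2}(q+2)$ together with $\tfrac1q\cdot\tfrac{p}{2}(q+2)=\tfrac p2+\tfrac pq=\tfrac p2+(1-\tfrac p2)=1$, I would conclude $\|I_1(|u|)\|_{L^q(\bb C)}\le C_p\|f\|_p=C_p\|u\|_{L^p(B(0,R))}$, which is the claim; note the constant depends only on $p$, not on $R$.

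The main obstacle here is conceptual rather than computational. The argument genuinely needs $1<p<2$, and it rests on the $L^p$-boundedness of the Hardy--Littlewood maximal operator, which for a fully self-contained account one should either cite or deduce from its weak-$(1,1)$ bound via the Marcinkiewicz interpolation theorem --- this is the one ingredient beyond elementary measure theory. Everything else is soft: a geometric (dyadic) sum and a single application of H\"older's inequality, with no cancellation exploited, which is why the estimate survives with only an $L^p$ hypothesis on $u$. One should also record that at $p=1$ the lemma must be weakened, since $I_1$ maps $L^1(B(0,R))$ only into weak-$L^2$ rather than $L^2$.
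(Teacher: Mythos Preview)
Your proof is correct and follows essentially the same route as the paper's sketch: split the Riesz potential at a scale $\delta$, control the near part by the Hardy--Littlewood maximal function via a dyadic sum, control the far part by H\"older's inequality (this is where $p<2$ enters), then optimize $\delta$ to obtain Hedberg's pointwise inequality and conclude via the $L^p$-boundedness of the maximal operator. The paper merely gestures at this argument and cites \cite[Ch.~3]{heinonen} for details, whereas you have carried out the computation in full and also made explicit the restriction $1<p<2$ and the failure at $p=1$, which the paper leaves implicit.
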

\begin{sproof}
Divide $I_1(|u|)(w)$ into an integral on a ball around $w$ and an integral on the complement of a ball around $w$.  The integral bounded away from $w$ is in $L^{2p/(2-p)}(\bb C)$ by H\"older's inequality.  The integral around $w$ can be bounded by the maximal function of $u$.  For details see \cite[ch. 3]{heinonen}.
\end{sproof}

\subsection{Pseudo-Fundamental Solution}

We will use the following notation in this section: 
\begin{align*}
\partial_{\bar z}^\mu f &= \partial_{\bar z}f - \mu(z) \partial_z f,\\
\intertext{and}
\partial_{\bar z}^{\mu *} &= \partial_{\bar z}f - \partial_z(\mu(z) f).
\end{align*}
Note that $\partial_{\bar z}^{\mu *}$ is the adjoint of $\partial_{\bar z}^\mu f $.

Recall that $\mu$ is a $C^\infty$-smooth function with compact support defined on $\bb C$.  We now define a pseudo-fundamental solution for the Beltrami equation.  Let
\begin{align}
S(w,z) = \frac{1}{\pi}\frac{1}{w-z + \mu(w)(\bar w - \bar z)}.
\end{align}
The denominator satisfies,
\begin{align*}
|w+\mu(w)\bar w-(z+\mu(w) \bar z)| \ge |w-z|  - k |\bar w - \bar z| = (1-k)|w-z|. 
\end{align*}
So
\begin{align}
|S(w,z)| \le \frac{1}{\pi(1-k)}\frac{1}{|w-z|}. \label{boundi}
\end{align}
When $w\ne z$, we have that $S(w,z)$ is smooth and that
\begin{align}
|\partial_{\bar z}^{\mu*} S(w,z)| &\le C_1\frac{|\mu(w) - \mu(z)|}{|w-z|^2} + C_2\frac{|\partial_z\mu(z)|}{|w-z|}.\nonumber\\
\intertext{So}
|\partial_{\bar z}^{\mu*} S(w,z)| &\le \frac{C'}{|w-z|}, \label{boundii}
\end{align}
where $C_1,C_2,$ and $C'$ depend only on $\mu$.  We constructed $S(w,z)$ so that
\begin{align}
\partial_{\bar z}S(w,z) - \mu(w)\partial_zS(w,z) = 0. \label{frozcoef}
\end{align}
This means that $S(w,z)$ is a fundamental solution for \eqref{beq} when $\mu$ is set to be the constant $\mu(w)$.

A ``Green's theorem"-type statement can be proved for $S(w,z)$.  Let $D' \subset D$ be disks centered at some point $w_0 \in \bb C$.
\begin{lemma}\label{greenlemma}
For $\varphi \in C_c^\infty(D')$,
\begin{align}
\int_{D'}\varphi(w)\partial_{\bar z}^{\mu*} S(w,z)dw = \varphi(z) + \partial_{\bar z}^{\mu*}\bigg (\int_{D'} \varphi(w)S(w,z)dw\bigg ). \label{greentype}
\end{align}
\end{lemma}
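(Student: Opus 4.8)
The plan is to read \eqref{greentype} as an identity of distributions in the variable $z$. Its left side is an absolutely convergent integral defining a continuous function of $z$ (by \eqref{boundii}, where $\partial_{\bar z}^{\mu*}S(w,z)$ denotes the pointwise value $\partial_{\bar z}S-\partial_z(\mu S)$ for $z\neq w$, which by \eqref{frozcoef} equals $(\mu(w)-\mu(z))\partial_zS-(\partial_z\mu)S$ and hence is $O(1/|w-z|)$, locally integrable). The function $v(z):=\int_{D'}\varphi(w)S(w,z)\,dw$ is continuous by \eqref{boundi}, so the only term needing interpretation is $\partial_{\bar z}^{\mu*}v$, taken distributionally. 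I would prove the identity by pairing both sides against an arbitrary $\psi\in C_c^\infty(\bb C)$ and reducing, via Fubini, to a local computation for $S(w,\cdot)$ near its only singularity $z=w$.

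First I would record the adjoint relation $\int (\partial_{\bar z}^{\mu*}g)\,f\,dA=-\int g\,\partial_{\bar z}^{\mu}f\,dA$, so that $\langle\partial_{\bar z}^{\mu*}v,\psi\rangle=-\int v\,\partial_{\bar z}^{\mu}\psi\,dA$; since $S\in L^1_{\text{loc}}(\bb C)$ and $\varphi,\partial_{\bar z}^\mu\psi$ are bounded with compact support, Fubini gives $\langle\partial_{\bar z}^{\mu*}v,\psi\rangle=-\int_{D'}\varphi(w)\,I(w)\,dw$ with $I(w):=\int_{\bb C}S(w,z)\,\partial_{\bar z}^{\mu}\psi(z)\,dz$. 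The heart of the proof is to show
\[
I(w)=\psi(w)-\int_{\bb C}\psi(z)\,\partial_{\bar z}^{\mu*}S(w,z)\,dz .
\]
To get this I would excise $B(w,\epsilon)$ and integrate by parts on $\bb C\setminus B(w,\epsilon)$, where $S(w,\cdot)$ is smooth, using the Cauchy--Pompeiu/Green identities $\int_\Omega\partial_{\bar z}F\,dA=\tfrac{1}{2i}\int_{\partial\Omega}F\,dz$ and $\int_\Omega\partial_{z}F\,dA=-\tfrac{1}{2i}\int_{\partial\Omega}F\,d\bar z$. Since $\psi$ has compact support, the outer boundary drops out and one is left with $-\int_{\bb C\setminus B(w,\epsilon)}\psi\,\partial_{\bar z}^{\mu*}S\,dA$ together with a boundary integral $-\tfrac{1}{2i}\int_{\partial B(w,\epsilon)}\psi\,S\,(dz+\mu\,d\bar z)$. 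Letting $\epsilon\to0$, the interior terms converge to the displayed integrals (by local integrability of $S$ and of $\partial_{\bar z}^{\mu*}S$); parametrizing $\partial B(w,\epsilon)$ and using $\mu(z)\to\mu(w)$, $\psi(z)\to\psi(w)$ together with the explicit form of $S$, the boundary term tends to $-2i\,\psi(w)$, so its contribution is exactly $\psi(w)$. Substituting $I(w)$ back and applying Fubini once more, which is legitimate since $|\varphi(w)\psi(z)\partial_{\bar z}^{\mu*}S(w,z)|\lesssim|\varphi(w)\psi(z)|/|w-z|$, yields $\langle\partial_{\bar z}^{\mu*}v,\psi\rangle=-\int\varphi\psi\,dA+\int_{\bb C}\psi(z)\big(\int_{D'}\varphi(w)\partial_{\bar z}^{\mu*}S(w,z)\,dw\big)\,dz$, which is \eqref{greentype} after rearrangement.

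The main obstacle is the boundary-term limit: the whole identity hinges on $-\tfrac{1}{2i}\int_{\partial B(w,\epsilon)}\psi\,S\,(dz+\mu\,d\bar z)\to\psi(w)$, and this is precisely where the normalization $\tfrac1\pi$ in $S$ and the frozen-coefficient property \eqref{frozcoef} enter. Concretely, after parametrizing $z=w+\epsilon e^{i\theta}$ one is reduced to showing the contour integral $\int_{|u|=1}\frac{u^2-\mu(w)}{u(u^2+\mu(w))}\,du$ equals $2\pi i$, which one verifies by residues: the pole at $u=0$ contributes $-1$ while the two poles with $u^2=-\mu(w)$, both inside the unit circle since $|\mu|\le k<1$, contribute $+1$ each. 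Finally, since the left-hand side of \eqref{greentype} is a continuous function of $z$, the identity also shows that the distribution $\partial_{\bar z}^{\mu*}v$ is represented by a continuous function, so \eqref{greentype} may be read pointwise.
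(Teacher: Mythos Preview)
Your argument is correct and shares the same skeleton as the paper's: one tests \eqref{greentype} against an arbitrary $\psi$, uses Fubini (justified by \eqref{boundi} and \eqref{boundii}), and reduces everything to the one-variable identity
\[
\int S(w,z)\,\partial_{\bar z}^{\mu}\psi(z)\,dz \;=\; \psi(w)\;-\;\int \psi(z)\,\partial_{\bar z}^{\mu*}S(w,z)\,dz,
\]
which is exactly the paper's intermediate formula \eqref{greentypei}. The difference lies in how this identity is obtained. You excise $B(w,\epsilon)$, integrate by parts in the punctured region, and compute the boundary contribution $-\tfrac{1}{2i}\int_{\partial B(w,\epsilon)}\psi\,S\,(dz+\mu\,d\bar z)$ by a residue argument on the unit circle. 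The paper instead exploits \eqref{frozcoef} algebraically: since $\partial_{\bar z}S-\mu(w)\partial_z S=0$, one has $\partial_{\bar z}^{\mu*}S=\partial_z\big((\mu(w)-\mu(z))S\big)$ pointwise for $z\neq w$, and because $(\mu(w)-\mu(z))S$ is bounded (Lipschitz $\mu$) no boundary term appears when integrating by parts. What remains is $\int S(w,z)(\psi_{\bar z}-\mu(w)\psi_z)\,dz$, which the linear change of variables $\zeta=z+\mu(w)\bar z$ turns into the classical Cauchy--Green formula, yielding $\psi(w)$ without any contour computation.

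Both routes are valid. The paper's trick of freezing the coefficient and changing variables is a bit slicker, since it sidesteps the residue calculation entirely and makes the role of \eqref{frozcoef} transparent. Your approach is more hands-on but fully self-contained, and your residue computation is correct (the pole at $0$ contributes $-1$ and the two poles at $u^2=-\mu(w)$ contribute $+1$ each, all inside $|u|=1$ since $|\mu|<1$).
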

By  \eqref{boundii} the integral on the left hand side is well-defined.  By \eqref{boundi}, the integral on the right hand side is well-defined and $C^1$-smooth. 
\begin{proof}
In order to prove \eqref{greentype} first we will show that
\begin{align}
\int_D \varphi(z)\partial_{\bar z}^{\mu*} S(w,z) dz = \varphi(w) - \int_D S(w,z)\partial_{\bar z}^\mu\varphi(z) dz. \label{greentypei}
\end{align}
Subtracting \eqref{frozcoef}, the left hand side becomes
\begin{align*}
\int_D \varphi(z)((\mu(w) - \mu(z))S(w,z))_z dz.
\end{align*}
We can use integration by parts by \eqref{boundi}.  So the above term becomes
\begin{align*}
-\int_D\varphi(z)_z(\mu(w)&-\mu(z))S(w,z)dz \\
&=  -\int_D S(w,z)(\varphi(z)_{\bar z} - \mu(z)\varphi(z)_z) dz + \int_D S(w,z)( \varphi(z)_{\bar z} -\mu(w)\varphi(z)_z) dz.
\end{align*}
We would like the second term to be $\varphi(w)$.  To see this, consider the change of variable 
\begin{align*}
\zeta = z+ \mu(w)\bar z, \xi = w + \mu(w) \bar w.
\end{align*}
Then, the second integral becomes
\begin{align*}
\int_D S(w,z)( \varphi(z)_{\bar z} -\mu(w)\varphi(z)_z) dz = \frac{1}{\pi}\int_D \frac{1}{\xi - \zeta} \varphi(z(\zeta))_{\bar \zeta} d\zeta = \varphi(w),
\end{align*}
by the Cauchy-Green formula.
This shows \eqref{greentypei}.  To show \eqref{greentype} we integrate the left hand side of \eqref{greentype} against another test function $\psi$ and apply Fubini's theorem:
\begin{align*}
\int_D\psi(z)\int_{D'}\varphi(w)\partial_{\bar z}^{\mu*} S(w,z) dw dz &= \int_{D'}\varphi(w)\int_D\psi(z)\partial_{\bar z}^{\mu*} S(w,z) dz dw\\
&=\int_{D'} \varphi(w) \psi(w)dw - \int_{D'}\varphi(w) \int_D S(w,z)\partial_{\bar z}^\mu \psi(z) dzdw\\
&= \int_{D'} \varphi(w) \psi(w)dw  - \int_{D}\partial_{\bar z}^\mu \psi(z) \int_{D'} S(w,z)\varphi(w)dw dz\\
&=\int_D \varphi(z) \psi(z)dz + \int_D \psi(z) \partial_{\bar z}^{\mu*}\bigg (\int_{D'} S(w,z) \varphi(w) dw\bigg ) dz
\end{align*}
by \eqref{greentypei}, Fubini's theorem and integration by parts.
This is true for an arbitrary test function $\psi$, so \eqref{greentype} is true a.e.
\end{proof}
\subsection{Continuity of the Solution}
Let $D',D$ be disks centered at $w_0$ with $\overline {D'} \subset D$.  For $F \in C_c^\infty(D)$, define
\begin{align*}
\tilde{g}(w) \coloneqq \int_D g(z) \partial_{\bar z}^{\mu*}(\rho(z)S(w,z)) dz + \int_D F(z) \rho(z)S(w,z)dz,
\end{align*}
where $\rho \in C_c(D)$ and $\rho(z) = 1$ for $z \in D'$.
\begin{lemma}
Let $g \in L^1_{\text{loc}}(\bb C)$ and $|\nabla g| \in L^2(\bb C)$ with
\begin{align*}
\partial_{\bar z}^\mu g = F.
\end{align*}
in distribution.
Suppose $\mu, F \in C_c^\infty(\bb C)$ and $|\mu|\le k < 1$.
Then $g(w) = \tilde{g}(w)$ for a.e.\ $w \in D'$.
\end{lemma}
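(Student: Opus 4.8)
The plan is to establish the distributional identity $g = \tilde g$ on $D'$ by testing both sides against an arbitrary $\varphi \in C_c^\infty(D')$: once one knows
\[
\int_{D'}\varphi(w)\,g(w)\,dw = \int_{D'}\varphi(w)\,\tilde g(w)\,dw
\]
for every such $\varphi$, it follows that $g = \tilde g$ a.e.\ on $D'$. First note that $|\nabla g| \in L^2(\bb C)$ together with the Sobolev embedding in the plane gives $g \in L^p_{\text{loc}}(\bb C)$ for every $p < \infty$; combined with the bounds \eqref{boundi} and \eqref{boundii} on $S$, the compact supports of $\rho$ and $F$, and Lemma \ref{riesz}, this makes $\tilde g$ well defined on $D'$ and justifies the applications of Fubini's theorem used below (for this, and for $\partial_{\bar z}^{\mu*}(\rho S)$ to make sense, one should read $\rho \in C_c^\infty(D)$). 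Inserting the definition of $\tilde g$ and interchanging the order of integration,
\[
\int_{D'}\varphi\,\tilde g = \int_D g(z)\left(\int_{D'}\varphi(w)\,\partial_{\bar z}^{\mu*}\big(\rho(z)S(w,z)\big)\,dw\right)dz + \int_D F(z)\rho(z)\left(\int_{D'}\varphi(w)S(w,z)\,dw\right)dz,
\]
where $\partial_{\bar z}^{\mu*}$ acts in the variable $z$.

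The key step is to evaluate the inner $w$-integral in the first term. Set $h(z) = \int_{D'}\varphi(w)S(w,z)\,dw$; by \eqref{boundi} (as already recorded in Lemma \ref{greenlemma}) this is a $C^1$ function of $z$ on $\bb C$. For $z \in D'$, since $\rho \equiv 1$ on the open set $D'$ one has $\partial_{\bar z}^{\mu*}\big(\rho(z)S(w,z)\big) = \partial_{\bar z}^{\mu*}S(w,z)$ in a neighborhood of $z$, so Lemma \ref{greenlemma} (i.e.\ \eqref{greentype}) applies and the inner integral equals $\varphi(z) + \partial_{\bar z}^{\mu*}h(z)$. For $z \in D \setminus D'$ the variable $w$ stays in the compact set $\operatorname{supp}\varphi \subset\subset D'$, which lies at a fixed positive distance from $z$; hence $S(w,z)$ is smooth in $z$ on a neighborhood of such $z$ and differentiation under the integral sign gives $\partial_{\bar z}^{\mu*}\big(\rho(z)h(z)\big)$. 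Using once more that $\rho \equiv 1$ on $D'$ (so $\partial_{\bar z}^{\mu*}(\rho h) = \partial_{\bar z}^{\mu*}h$ there) and that $\varphi$ vanishes off $D'$, the two cases combine into the single identity, valid for a.e.\ $z \in D$,
\[
\int_{D'}\varphi(w)\,\partial_{\bar z}^{\mu*}\big(\rho(z)S(w,z)\big)\,dw = \varphi(z) + \partial_{\bar z}^{\mu*}\big(\rho(z)h(z)\big).
\]

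Substituting this back, the first term of $\int_{D'}\varphi\,\tilde g$ becomes $\int_{D'} g\,\varphi\,dz + \int_D g\,\partial_{\bar z}^{\mu*}(\rho h)\,dz$. Now $\rho h \in C_c^1(D)$, so it is an admissible test function for the distributional equation $\partial_{\bar z}^\mu g = F$; integrating by parts to transfer $\partial_{\bar z}^{\mu*}$ from $\rho h$ onto $g$ (the same manipulation as in the proof of Lemma \ref{greenlemma}) yields $\int_D g\,\partial_{\bar z}^{\mu*}(\rho h)\,dz = -\int_D F\,\rho h\,dz$. This exactly cancels the second term of $\int_{D'}\varphi\,\tilde g$, leaving $\int_{D'}\varphi\,\tilde g = \int_{D'} g\,\varphi$, which is the desired identity; since $\varphi$ was arbitrary, $g = \tilde g$ a.e.\ on $D'$.

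I expect the main obstacle to be the bookkeeping around the singular kernel $S$. The interchange of $\partial_{\bar z}^{\mu*}$ (in $z$) with the $w$-integral is legitimate only away from the diagonal $z = w$, i.e.\ for $z \notin \operatorname{supp}\varphi$; on the diagonal it must be replaced by the ``Green's theorem'' identity \eqref{greentype}, and this is precisely where the point mass $\varphi(z)$ is produced — which is why Lemma \ref{greenlemma} was isolated beforehand. The remaining ingredients are routine: the Sobolev embedding, the two applications of Fubini (the integrands have only an $|w-z|^{-1}$ singularity, integrable in the plane and controlled by Lemma \ref{riesz}), and the fact that $h$ is genuinely $C^1$ so that $\rho h$ is a legitimate test function for the distributional Beltrami equation — this last being the regularity of the frozen-coefficient Cauchy transform, which follows from \eqref{boundi} as in Lemma \ref{greenlemma}.
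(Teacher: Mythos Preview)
Your proof is correct and follows essentially the same route as the paper: test against $\varphi \in C_c^\infty(D')$, apply Fubini, split the $z$-integration into $D'$ (where $\rho\equiv 1$ and Lemma~\ref{greenlemma} supplies the point mass) and $D\setminus D'$ (where one may differentiate under the integral), recombine into $\varphi + \partial_{\bar z}^{\mu*}(\rho h)$, and then use $\rho h$ as a test function in the distributional equation $\partial_{\bar z}^\mu g = F$ to obtain the cancellation. The paper argues the admissibility of $\rho h$ via approximation by smooth compactly supported functions, while you invoke $\rho h \in C_c^1(D)$ directly; and your observation that one needs $\rho \in C_c^\infty(D)$ rather than merely $C_c(D)$ is a small correction to the paper's stated hypothesis.
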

\begin{proof}
Let $\varphi \in C_c^\infty(D')$, then
\begin{align}
\int_{D'} \varphi(w)\tilde{g}(w)dw &= \int_D g(z) \int_{D'}\varphi(w) \partial_{\bar z}^{\mu*}(\rho(z)S(w,z))dw dz  \nonumber \\
&\text{\ \ \ \ \ \ \ \ \ \ \ \ \ \ \ \ \ \ \ \ \ \ \ \ }+ \int_D F(z)\int_{D'} \varphi(w) \rho(z)S(w,z)dw dz\label{visu1} \\  
&= I + II. \nonumber
\end{align}
The use of Fubini's theorem is justified here by equations \eqref{boundi} and \eqref{boundii}.

Then
\begin{align*}
I &= \int_{D'} g(z) \int_{D'}\varphi(w) \partial_{\bar z}^{\mu*} S(w,z)dw dz + \int_{D\setminus D'} g(z) \int_{D'}\varphi(w) \partial_{\bar z}^{\mu*}(\rho(z)S(w,z))dw dz \\
&= \int_{D'} g(z)\varphi(z) dz + \int_{D'}g(z)\partial_{\bar z}^{\mu*}\bigg(\int_{D'} \varphi(w)S(w,z) dw\bigg) dz\\
& \text{\ \ \ \ \ \ \ \ \ \ \ \ \ \ \ \ \ \ \ \ \ \ \ \ }+  \int_{D\setminus D'} g(z) \int_{D'}\varphi(w) \partial_{\bar z}^{\mu*}(\rho(z)S(w,z))dw dz
\end{align*}
by equation \eqref{greentype}.
In the last integral $z \in D\setminus D'$ and therefore bounded away from $w$.  We can take the derivative outside of the integral and combine the last two terms.  This gives
\begin{align}
I = \int_{D'} g(z)\varphi(z) dz +  \int_{D}g(z)\partial_{\bar z}^{\mu*}\bigg (\rho(z)\int_{D'} \varphi(w)S(w,z) dw\bigg) dz. \label{Ifinal}
\end{align}
So returning to \eqref{visu1},
\begin{align*}
\int_{D'}\varphi(w)(\tilde{g}(w) - g(w))dw &=   \int_{D}g(z)\partial_{\bar z}^{\mu*}\bigg (\rho(z)\int_{D'} \varphi(w)S(w,z) dw\bigg) dz+ II.\\
\intertext{Letting $\psi(z) = \rho(z)\int_{D'}\varphi(w)S(w,z)dw$ yields}
&= \int_{D}g(z) \partial_{\bar z}^{\mu*}\psi(z) dz + \int_{D} F(z)\psi(z) dz.
\end{align*}
By \eqref{boundi} and \eqref{boundii}, $\psi$ and $\partial_{\bar z}^{\mu*} \psi$ are continuous with compact support and therefore can be approximated by smooth functions with compact support in $D'$.  Since $\partial_{\bar z}^\mu g = F$ in distribution, we see that
\begin{align*}
\int_{D'}\varphi(w)(\tilde{g}(w) - g(w))dw = 0.
\end{align*}
So $\tilde{g}(w) = g(w)$ a.e.\ in $D'$ and
\begin{align}
g(w) = \int_D g(z) \partial_{\bar z}^{\mu*}(\rho(z)S(w,z)) dz + \int_D F(z) \rho(z)S(w,z)dz. \label{finalform}
\end{align}
for a.e.\ $w \in D'$.
\end{proof}

\begin{theorem}
Let $g \in L^1_{\text{loc}}(\bb C)$, $Dg \in L^2(\bb C)$, $\mu, F \in C_c^\infty(\bb C)$ and $|\mu|\le k < 1$.  If $g$ satisfies
\begin{align*}
\partial_{\bar z}^\mu g = F
\end{align*}
in distribution, then $g$ is continuous.
\end{theorem}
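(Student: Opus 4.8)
The plan is to read continuity off the representation formula \eqref{finalform}. Since continuity is local, I would fix $w_0 \in \bb C$, choose disks $\overline{D'} \subset D$ centered at $w_0$ together with a cutoff $\rho$ as in the preceding lemma, and then a slightly smaller disk $D''$ with $\overline{D''} \subset D'$. By that lemma, for a.e.\ $w \in D'$,
\begin{align*}
g(w) = I(w) + II(w), \qquad I(w) := \int_D g(z)\,\partial_{\bar z}^{\mu*}\!\big(\rho(z)S(w,z)\big)\,dz, \qquad II(w) := \int_D F(z)\rho(z)S(w,z)\,dz,
\end{align*}
with $\partial_{\bar z}^{\mu*}$ acting in the $z$ variable. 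It then suffices to prove that $I$ and $II$ are continuous on $D''$; as $w_0$ is arbitrary this exhibits a continuous (indeed locally H\"older) representative of $g$ on $\bb C$. Throughout one may replace each kernel $K(w,z)$ by $\chi(w)K(w,z)$ for a cutoff $\chi \equiv 1$ near $D''$ supported in a fixed ball, which changes neither $I$ nor $II$ on $D''$ and makes every kernel supported, in both variables, in a fixed bounded set, so the kernel bounds below only need to be checked for $w$ and $z$ in a bounded region.

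First I would bootstrap the integrability of $g$. From $g \in L^1_{\text{loc}}(\bb C)$, $Dg \in L^2(\bb C)$ and the Poincar\'e inequality one gets $g \in L^2_{\text{loc}}(\bb C)$. By \eqref{boundi} and \eqref{boundii}, the kernels $z \mapsto \partial_{\bar z}^{\mu*}(\rho(z)S(w,z))$ and $z \mapsto \rho(z)S(w,z)$ are dominated by $C|w-z|^{-1}$ and supported in $D$, while $F\rho$ is bounded; hence $|g(w)| \le C\,I_1(|g|)(w) + C$ for a.e.\ $w \in D'$. Lemma \ref{riesz}, applied with an exponent slightly below $2$, then gives $g \in L^q_{\text{loc}}(\bb C)$ for some $q > 2$. (Equivalently one could invoke the Sobolev embedding $W^{1,2}_{\text{loc}}(\bb C) \hookrightarrow L^q_{\text{loc}}(\bb C)$ for every finite $q$.)

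For $II$ I would apply Lemma \ref{kernelcont} to the kernel $\rho(z)S(w,z)$: bound \eqref{kboundi} is just \eqref{boundi}, and \eqref{kboundii} follows from $|\nabla_w S(w,z)| \le C|w-z|^{-2}$ (valid for $w,z$ in a bounded set, obtained by differentiating the explicit formula for $S$ and using that $\mu$ is smooth and the denominator is $\ge (1-k)|w-z|$), since a gradient bound of that shape converts into \eqref{kboundii} by the standard dichotomy: when $|w_1-w_2| \le \tfrac12|w_1-z|$ the segment $[w_1,w_2]$ avoids a neighbourhood of $z$ and one uses the mean value theorem, and otherwise one uses \eqref{kboundi} at both endpoints. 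Taking $u = F\rho \in L^p$ with any $p > 2$, Lemma \ref{kernelcont} makes $II$ H\"older continuous on $D''$.

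The substance is the continuity of $I$, and this is where the construction of $S$ is used. I would split
\begin{align*}
\partial_{\bar z}^{\mu*}\!\big(\rho(z)S(w,z)\big) = \rho(z)\,\partial_{\bar z}^{\mu*}S(w,z) + \big(\partial_{\bar z}\rho(z) - \mu(z)\partial_z\rho(z)\big)S(w,z).
\end{align*}
For $w \in D''$ the second summand is supported in $z \in D \setminus D'$, where $|w-z|$ is bounded below, so it is smooth in $w$ and integrating it against $g \in L^1_{\text{loc}}$ produces a smooth function on $D''$. For the first summand, the kernel $\rho(z)\partial_{\bar z}^{\mu*}S(w,z)$ satisfies \eqref{kboundi} by \eqref{boundii}, and for \eqref{kboundii} I need $|\nabla_w\,\partial_{\bar z}^{\mu*}S(w,z)| \le C|w-z|^{-2}$ on a bounded set. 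Here I would invoke the frozen-coefficient identity \eqref{frozcoef}: subtracting it rewrites
\begin{align*}
\partial_{\bar z}^{\mu*}S(w,z) = \big(\mu(w)-\mu(z)\big)\partial_z S(w,z) - \big(\partial_z\mu(z)\big)S(w,z),
\end{align*}
and differentiating this in $w$, every term is $O(|w-z|^{-2})$ except $(\mu(w)-\mu(z))\,\nabla_w\partial_z S(w,z)$, which is saved because the Lipschitz gain $|\mu(w)-\mu(z)| \le C|w-z|$ exactly compensates $|\nabla_w\partial_z S(w,z)| \le C|w-z|^{-3}$. Then Lemma \ref{kernelcont} with $u = g \in L^q_{\text{loc}}$, $q > 2$, makes $I$ H\"older continuous on $D''$, so $g$ agrees a.e.\ on $D''$ with a continuous function and the theorem follows. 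I expect this verification of \eqref{kboundii} for $\partial_{\bar z}^{\mu*}(\rho S)$ to be the only delicate point: a crude estimate gives merely $|\nabla_w\,\partial_{\bar z}^{\mu*}S| = O(|w-z|^{-3})$, and it is precisely the cancellation built into $S$ through \eqref{frozcoef} that lowers this to the $O(|w-z|^{-2})$ needed to apply Lemma \ref{kernelcont}.
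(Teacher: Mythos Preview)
Your proof is correct and follows essentially the same route as the paper: use the representation \eqref{finalform}, bootstrap the local integrability of $g$ via Lemma~\ref{riesz} to get $g \in L^q_{\text{loc}}$ for some $q>2$, and then apply Lemma~\ref{kernelcont} to each of the two integrals. The paper is terser---it simply asserts that both $\rho(z)S(w,z)$ and $\partial_{\bar z}^{\mu*}(\rho(z)S(w,z))$ satisfy \eqref{kboundi} and \eqref{kboundii} and invokes Lemma~\ref{kernelcont}---whereas you spell out why the latter kernel obeys \eqref{kboundii}, using the frozen-coefficient identity \eqref{frozcoef} to exhibit the cancellation that turns the naive $O(|w-z|^{-3})$ bound on $\nabla_w\,\partial_{\bar z}^{\mu*}S$ into $O(|w-z|^{-2})$; this is exactly the content behind the paper's assertion.
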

\begin{proof}
By \eqref{boundi}, and \eqref{boundii},
\begin{align}
|g(w)| \le c_1\int_D \frac{|g(z)|}{|w-z|} dz + c_2. \label{rieszbound}
\end{align}
We already know that $Dg \in L^2(\bb C)$.  So by the Poincar\'e inequality, $g \in L^2_{\text{loc}}(\bb C)$ and therefore $g \in L^{3/2}_{\text{loc}}(\bb C)$.  Equation \eqref{rieszbound} shows that
\begin{align*}
|g(w)| \le c_1 I_1(\mathbbm{1}_{D}|g|)(w)| + c_2,
\end{align*}
where $I_1(|g|\mathbbm{1}_{D})(w) = \int_{\bb C} \frac{\mathbbm{1}_D(z)|g(z)|}{|w-z|}dz$. By Lemma \ref{riesz}, $I_1$ is a map from $L^p(\bb C)$ to $L^{2p/(2-p)}(\bb C)$. 

If we choose $p = 3/2$, then we see that $g \in L^6_{\text{loc}}(\bb C)$.
Both $\rho(z)S(w,z)$ and $\partial_{\bar z}^{\mu*}(\rho(z)S(w,z))$ satisfy \eqref{kboundi} and \eqref{kboundii}.  So by Lemma \ref{kernelcont}, $g$ is H\"older continuous with exponent $2/3$ (technically will agree a.e.\ with a H\"older continuous function).  That is, $|g(w_1) - g(w_2)| \le C|w_1-w_2|^{2/3}$.  By the proof of Lemma \ref{kernelcont}, $C$ only depends on $\mu, F$ and the $L^6$-norm of $g$ on the support of $\mu$.
\end{proof}
If we set $F = 0$, then the theorem states that the solution to the Beltrami equation with $\mu \in C_c^\infty(\bb C)$ is continuous.

\section{Isothermal Coordinates}\label{iso}
In this final section we have included the connection of the above discussion to the existence of isothermal coordinates on a manifold.  We also present a different proof for the existence of isothermal coordinates when $\mu$ is assumed to be H\"older continuous.  This proof has the benefit of being relatively simple but requires extra regularity for $\mu$.  Also, it only provides local coordinates not global solutions.  A version of this proof is presented in \cite{chern}.
\begin{prop}
Let $M$ be a 2-dimensional Riemannian manifold.  Then finding isothermal coordinates on $M$ is equivalent to solving $\Delta f = 0$. 
\end{prop}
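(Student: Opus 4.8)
The plan is to prove the equivalence by giving an explicit local construction in each direction, using throughout the identity $\Delta_g f = -*_g d(*_g df)$ for a function $f$ --- which follows from $\Delta_g = d\delta + \delta d$, the fact that $\delta$ annihilates $0$-forms (so the $d\delta$ term drops out), and $\delta = -*_g d *_g$. For the easy direction, suppose $(u,v)$ are isothermal coordinates near $p$, so $g = e^{\varphi}(du^2 + dv^2)$ with $\varphi$ smooth. Then $\{e^{\varphi/2}\,du,\ e^{\varphi/2}\,dv\}$ is a $g$-orthonormal coframe, so the definition of $*_g$ together with Proposition \ref{Hodgeprop}(1) gives $*_g du = dv$. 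Hence $d(*_g du) = d(dv) = 0$ and $\Delta_g u = -*_g d(*_g du) = 0$, and likewise $\Delta_g v = 0$; since $(u,v)$ is a chart, $du$ and $dv$ are nowhere zero. So each coordinate function of an isothermal chart solves $\Delta_g f = 0$ with nonvanishing differential.

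For the converse, let $f$ solve $\Delta_g f = 0$ on a neighborhood of $p$ with $df(p) \neq 0$. Then $*_g d(*_g df) = 0$, and applying the invertible operator $*_g$ (Proposition \ref{Hodgeprop}(2)) gives $d(*_g df) = 0$; thus $*_g df$ is a closed $1$-form, so by the Poincar\'e lemma $*_g df = dh$ on a smaller neighborhood of $p$ for some smooth $h$. I claim $(f,h)$ is then an isothermal chart near $p$. To check this, pick a $g$-orthonormal coframe $\{e^1,e^2\}$ near $p$ and write $df = \alpha e^1 + \beta e^2$; then $dh = *_g df = \alpha e^2 - \beta e^1$. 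Since $df(p) \neq 0$, $\alpha^2 + \beta^2 > 0$ near $p$, so $df$ and $dh$ are linearly independent there and $(f,h)$ is a local coordinate system; and a direct symmetric-product computation gives
\[
df^2 + dh^2 = (\alpha^2 + \beta^2)\big((e^1)^2 + (e^2)^2\big) = (\alpha^2 + \beta^2)\,g,
\]
so $g = e^{\varphi}(df^2 + dh^2)$ with $e^{\varphi} = (\alpha^2 + \beta^2)^{-1} > 0$, i.e.\ $(f,h)$ are isothermal.

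I expect the only real work to be bookkeeping: keeping the signs in $*_g$, $\delta$, and $\Delta_g$ consistent with Section \ref{hso} (in particular the point that $\delta$ annihilates functions, so $\Delta_g f = \delta\,df$), and being careful that everything is local --- the Poincar\'e lemma and the nondegeneracy $\alpha^2 + \beta^2 > 0$ both hold only near $p$. There is no analytic obstacle once the Hodge-star identities of Section \ref{hso} are in hand; everything reduces to linear algebra in an orthonormal coframe. The genuinely nontrivial ingredient --- the existence near every point of a harmonic $f$ with $df(p) \neq 0$, which is what makes isothermal coordinates actually exist --- is the local existence statement recalled in Section \ref{epde} and reproved later in this section, and it powers the equivalence rather than being part of it.
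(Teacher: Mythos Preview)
Your proof is correct and follows essentially the same route as the paper: both reduce $\Delta_g f = 0$ to closedness of $*_g df$, invoke the Poincar\'e lemma to produce the conjugate function, and then verify directly that $df^2 + (*_g df)^2$ is a conformal multiple of $g$. Your presentation is arguably cleaner---you work in an abstract $g$-orthonormal coframe $\{e^1,e^2\}$ rather than implicitly assuming coordinates $u,v$ with $*du = dv$, and you spell out the ``easy'' direction (isothermal $\Rightarrow$ harmonic) that the paper leaves tacit---but the underlying argument is the same.
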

\begin{proof}
Let $U$ be an open neighborhood of $p \in M$ and $f \colon U \to V \subset \bb C$ be a diffeomorphism.  Let $g_E$ be the Euclidean metric and $\lambda \colon \bb C \to (0,\infty)$ be $C^\infty$-smooth.  If $f^*g_E = \lambda g$, then $f$ defines isothermal coordinates near $p$.

On the other hand, let $f = (f_1,f_2)$ and suppose $*df_1 = df_2$.  Then
 \begin{align*}
f^*(g_E) &= (f^*dx)^2 + (f^*dy)^2,\\
f^*dx &= df_1 = \frac{\partial f_1}{\partial u}du + \frac{\partial f_1}{\partial v} dv\\
\intertext{and}
f^*dy &= df_2 = \frac{\partial f_2}{\partial u}du + \frac{\partial f_2}{\partial v} dv.\\
\intertext{So}
(f^*dx)^2 + (f^*dy)^2 &= (df_1)^2 + (df_2)^2 = (df_1)^2 + (*df_1)^2 \\
&= \frac{\partial f_1}{\partial u}^2du^2 + 2\frac{\partial f_1}{\partial u}\frac{\partial f_1}{\partial v}dudv + \frac{\partial f_1}{\partial v}^2dv^2 + \frac{\partial f_1}{\partial u}dv^2 - 2\frac{\partial f_1}{\partial u}\frac{\partial f_1}{\partial v}dudv + \frac{\partial f_1}{\partial v}^2 du^2\\
&= \bigg (\frac{\partial f_1}{\partial u}^2 + \frac{\partial f_1}{\partial v}^2\bigg)(du^2+dv^2).
\end{align*}
This shows that $f$ defines isothermal coordinates.  Therefore, the existence of isothermal coordinates is equivalent to the existence of $f_1,f_2$ such that $*df_1 = df_2$.  By the Poincare lemma,  if $d*df_1 = 0$, then $f_2$ exists in a convex neighborhood of $p$.  The Laplacian, $\Delta =  \delta d$, on functions can be expressed as
\begin{align*}
\Delta f_1 = - * d( * df_1).
\end{align*}
So solving for $f_1$ is equivalent to solving
\begin{align*}
\Delta f_1 = 0
\end{align*}
with $df_1 \ne 0$ (so that $f = (f_1,f_2)$ is a diffeomorphism).  This can be done at least for a small neighborhood of $p$ (see below or \cite[Ch. 5.11]{taylor}).
\end{proof}
As a note, the above was computed using real coordinates.  If we are interested in computing this in complex coordinates, then $*df_1 = df_2$ becomes 
\begin{align*}
(*-i)df = 0.
\end{align*}
Here, $f$ is a function of $z$ and $\bar z$.  We also have that $*dz = -idz$ and $*d\bar z = id\bar z$.  So if $*df = -idf$, then $\partial f/\partial \bar z = 0$.  If $d*df = 0$, then there exists, locally, $g$ such that $dg = *df$.  So
\begin{align*}
*d(g-if) = *(dg)-i*(df) = **(df)-i(dg) = -df-idg= -id(g-if).
\end{align*}
This shows the proof in the complex coordinate case. 

The above discussion shows that Theorem \ref{mainthm} provides isothermal coordinate charts for any Riemannian metric defined on all of $\bb C$.  If one only desires local solutions there is an easier way to prove this fact.  This method works if $\mu$ is assumed to be H\"older continuous.

Define the following two integral operators on $f \colon \bb C \to \bb C$:
\begin{align*}
Tf(z) = \frac{1}{\pi}\int_{\bb C} \frac{f(w)}{z-w} dw
\end{align*}
and
\begin{align*}
Hf(z) = \lim_{\epsilon \to 0}\frac{1}{\pi} \int_{\bb C \setminus B(z,\epsilon)} \frac{f(w)}{(w-z)^2}dw.
\end{align*}
The relevant properties are:
\begin{itemize}

\item[(1)] If $f \in L^p(D)$, $2 < p < \infty$, then $Tf$ is H\"older continuous with exponent $1-2/p$.

\item[(2)] $H$ is an isometry from $L^2(\bb C) \to L^2(\bb C)$.

\item[(3)] If $f \in L^p(D)$, $2 < p < \infty$, and $f$ has compact support in $D$, then $(Tf)_{\bar z} = f$ and $(Tf)_z = Hf$.  So $f \in W^{1,2}(D)$.

\end{itemize}
Property (1) follows from Lemma \ref{kernelcont}.  Properties (2) and (3) are true for $f \in C_c^2(\bb C)$ and can be shown in general for $f \in L^p(\bb C)$ by taking an approximating sequence.

\begin{lemma}\label{holderH}
Suppose $f\colon\bb C \to \bb C$ is H\"older continuous with exponent $\alpha$, $0 < \alpha < 1$ and $f$ has compact support in $B(0,R)$.  Then $Hf$ is H\"older continuous with exponent $\alpha$.  Specifically,
\begin{align*}
|Hf(z_1) - Hf(z_2)| \le C_\alpha A|z_1 - z_2|^\alpha,
\end{align*}
where $A$ is the H\"older constant for $f$ and $C_\alpha = \frac{1}{\alpha}2^\alpha + \frac{1}{\alpha}3^\alpha + \frac{1}{1-\alpha}2^{\alpha - 1}+1$.
\end{lemma}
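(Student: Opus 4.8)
The plan is to estimate $|Hf(z_1)-Hf(z_2)|$ directly from the singular integral defining $H$ (the kernel $(w-z)^{-2}$ is too singular for Lemma~\ref{kernelcont} to apply, so Hölder regularity of $f$ is genuinely needed), using repeatedly the cancellation identity $\operatorname{p.v.}\int_{r<|w-z|<\rho}(w-z)^{-2}\,dw=0$, valid for every $0<r<\rho$ by angular symmetry. Write $\delta=|z_1-z_2|$; since $f$ is supported in $B(0,R)$, the tail of each principal value is an ordinary absolutely convergent integral. First I would split $\bb C$ into the near ball $B(z_1,2\delta)$, which contains both singularities (indeed $B(z_2,\delta)\subseteq B(z_1,2\delta)\subseteq B(z_2,3\delta)$), and its complement, writing $\pi\,Hf(z_j)=\operatorname{p.v.}\int_{B(z_1,2\delta)}f(w)(w-z_j)^{-2}\,dw+\int_{\bb C\setminus B(z_1,2\delta)}f(w)(w-z_j)^{-2}\,dw$ for $j=1,2$ and subtracting.

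For the near part I would handle each singularity by subtracting the value of $f$ there. Since $B(z_1,2\delta)$ is centered at $z_1$, the cancellation identity gives $\operatorname{p.v.}\int_{B(z_1,2\delta)}f(w)(w-z_1)^{-2}\,dw=\int_{B(z_1,2\delta)}(f(w)-f(z_1))(w-z_1)^{-2}\,dw$, which the Hölder hypothesis bounds by $A\int_{B(z_1,2\delta)}|w-z_1|^{\alpha-2}\,dw\lesssim\frac{A}{\alpha}(2\delta)^\alpha$; this is the origin of the $\tfrac1\alpha 2^\alpha$ term. For the $z_2$-singularity I would split $B(z_1,2\delta)=B(z_2,\delta)\cup\big(B(z_1,2\delta)\setminus B(z_2,\delta)\big)$: over $B(z_2,\delta)$ the same centered-cancellation argument applies, and over the remaining region, where $\delta\le|w-z_2|\le3\delta$, the kernel is non-singular, so subtracting $f(z_2)$ and using the Hölder bound produces a term $\lesssim\frac{A}{\alpha}3^\alpha\delta^\alpha$ (the origin of $\tfrac1\alpha 3^\alpha$) together with a single ``constant'' remainder $f(z_2)\int_{B(z_1,2\delta)\setminus B(z_2,\delta)}(w-z_2)^{-2}\,dw$.

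For the far part, the identity $\tfrac{1}{(w-z_1)^2}-\tfrac{1}{(w-z_2)^2}=\tfrac{(z_1-z_2)(2w-z_1-z_2)}{(w-z_1)^2(w-z_2)^2}$ together with $|w-z_2|\ge\tfrac12|w-z_1|$ for $w\notin B(z_1,2\delta)$ yields the kernel-difference bound $\lesssim\delta|w-z_1|^{-3}$; subtracting $f(z_1)$ and pairing $|f(w)-f(z_1)|\le A|w-z_1|^\alpha$ with it gives an absolutely convergent integral of size $\lesssim A\delta\int_{2\delta}^{\infty}r^{\alpha-2}\,dr=\frac{A}{1-\alpha}2^{\alpha-1}\delta^\alpha$, the origin of $\tfrac{1}{1-\alpha}2^{\alpha-1}$. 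What is left over is $f(z_1)\int_{\bb C\setminus B(z_1,2\delta)}\big((w-z_1)^{-2}-(w-z_2)^{-2}\big)\,dw$; here the cancellation identity is applied one last time — the first kernel integrates to $0$ over the centered complement, and the second equals $-\int_{B(z_1,2\delta)\setminus B(z_2,\delta)}(w-z_2)^{-2}\,dw$ — so this term combines exactly with the constant remainder from the near part into $(f(z_1)-f(z_2))\int_{B(z_1,2\delta)\setminus B(z_2,\delta)}(w-z_2)^{-2}\,dw$. The substitution $w=z_2+\delta v$ shows the remaining integral is bounded by a dimensionless absolute constant, so this combined term is $\lesssim A\delta^\alpha$ (contributing the final $+1$), using $|f(z_1)-f(z_2)|\le A\delta^\alpha$. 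Summing the four contributions and dividing by $\pi$ gives $|Hf(z_1)-Hf(z_2)|\le C_\alpha A\delta^\alpha$.

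I expect the difficulty to be bookkeeping rather than any isolated hard step. One must justify the conditionally convergent manipulations by keeping everything as $\operatorname{p.v.}\int_{\epsilon<|w-z|<\rho}$ and only sending $\epsilon\to0$, $\rho\to\infty$ at the end; and one must track the $\|f\|_\infty$-type terms precisely enough to see them cancel, so that the constant ends up depending on $\alpha$ alone, and to pin down the exact value $C_\alpha=\tfrac1\alpha 2^\alpha+\tfrac1\alpha 3^\alpha+\tfrac{1}{1-\alpha}2^{\alpha-1}+1$ (the precise radii chosen for the near balls affect only the numerical coefficients).
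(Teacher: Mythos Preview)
Your approach is correct and essentially the same as the paper's: split into a near ball $B(z_1,2|z_1-z_2|)$ and its complement, subtract the values $f(z_j)$ at the singularities, and use the H\"older hypothesis to bound the resulting absolutely convergent integrals. The only real differences are in execution: for the far piece the paper writes $(w-z_1)^{-2}-(w-z_2)^{-2}=2\int_{z_1}^{z_2}(w-\zeta)^{-3}\,d\zeta$ and applies Fubini (splitting $f(w)-f(z_1)=(f(w)-f(\zeta))+(f(\zeta)-f(z_1))$), whereas you use the algebraic factorization and a direct $|w-z_1|^{-3}$ bound; and you track the ``constant remainder'' terms explicitly while the paper tacitly uses that the p.v.\ integral of $(w-z)^{-2}$ over centered regions vanishes---your bookkeeping is in fact the more careful of the two.
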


\begin{proof}
Let $D = B(0,R)$, $z_1,z_2 \in D$ and $r = |z_1-z_2|$.  Let $D' = B(z_1,2r)$.  Then
\begin{align*}
|Hf(z_1) - Hf(z_2)| =\frac{1}{\pi}\bigg | \int_{D'}\frac{f(w)-f(z_1)}{(w-z_1)^2}dw &- \int_{D'} \frac{f(w)-f(z_2)}{(w-z_2)^2}dw \\ &+ \int_{D\setminus D'}\frac{f(w) - f(z_1)}{(w-z_1)^2}dw - \int_{D\setminus D'} \frac{f(w) - f(z_1)}{(w-z_1)^2} dw\bigg |.
\end{align*} 
The first two terms are dealt with similarly.  For the first term the bound is
\begin{align*}
\bigg |\int_{D'}\frac{f(w)-f(z_1)}{(w-z_1)^2}dw\bigg | &\le \int_{D'} \frac{A|w-z_1|^\alpha}{|w-z_1|^2} dw\\
&= A\int_0^{2r}s^{\alpha - 1} ds = \frac{A}{\alpha}2^\alpha r^\alpha.
\end{align*}
The second term has a bound of $\frac{A}{\alpha}3^\alpha r^\alpha$, which can be seen by integrating on the larger domain, $B(z_2,3r)$.

The third and fourth terms can be expressed as:
\begin{align*}
\int_{D\setminus D'} (f(w&)-f(z_1)) 2\int_{z_1}^{z_2}\frac{1}{(w-\zeta)^3} d\zeta dw\\
&=\int_{z_1}^{z_2}\int_{D\setminus D'}\frac{(f(w) - f(\zeta))}{(w-\zeta)^3} + \frac{f(\zeta)-f(z_1)}{(w-\zeta)^3} dwd\zeta
\end{align*}
by Fubini's theorem.
The first term:
\begin{align*}
\bigg |\int_{D\setminus D'}\frac{(f(w) - f(\zeta))}{(w-\zeta)^3}dw\bigg | & \le \int_{2r}^MA s^{\alpha - 2} ds \le \frac{A}{1-\alpha} (2r)^{\alpha- 1}.
\intertext{The second term:}
\bigg |\int_{D\setminus D'}\frac{(f(\zeta) - f(z_1))}{(w-\zeta)^3}dw\bigg | &\le \int_{2r}^M A r^\alpha s^{-2} ds \le Ar^{\alpha - 1}.
\end{align*}
When we integrate over the $\zeta$ variable the inequalities sum to $(\frac{A}{1-\alpha}2^{\alpha - 1}+A)r^\alpha$.  So combining this with the previous bounds we see that $Hf$ is H\"older continuous with exponent $\alpha$ and coefficient $C_{A,\alpha} = \frac{A}{\alpha}2^\alpha + \frac{A}{\alpha}3^\alpha + \frac{A}{1-\alpha}2^{\alpha - 1}+A$.
\end{proof}

\begin{theorem}\label{isothermlocal}
Suppose $\mu \in C_c(\bb C)$ is H\"older continuous with exponent $\alpha$, H\"older norm $A$, and $\|\mu\|_\infty = k < 1$.  In addition, suppose $\mu$ has compact support in $B(0,R)$.  If $R$ is sufficiently small, then there exists a map $f\colon B(0,R) \to \bb C$ such that $f_{\bar z} = \mu f_z$. Furthermore, $f$ is a homeomorphism onto its image.
\end{theorem}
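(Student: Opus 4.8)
The plan is to follow Chern's method: look for $f$ as a small perturbation of the identity, $f(z) = z + Th(z)$ for an unknown $h$, and convert \eqref{beq} into an integral equation for $h$. By property (3) above, for $h \in L^p(\bb C)$ with $p>2$ and compact support one has $(Th)_{\bar z} = h$ and $(Th)_z = Hh$, so the requirement $f_{\bar z} = \mu f_z$ becomes $h = \mu + \mu Hh$, i.e.\ $(\mathrm{Id} - M_\mu H)h = \mu$, where $M_\mu$ denotes multiplication by $\mu$. It therefore suffices to solve this equation with $h \in L^p(\bb C)$, $p>2$, compactly supported: then $f = z + Th$ is H\"older continuous by property (1), lies in $W^{1,2}_{\mathrm{loc}}(\bb C)$, and solves \eqref{beq} a.e.\ by construction, so the real content is solving the integral equation and then proving injectivity.

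I would first solve the equation in $L^2(\bb C)$, where no Calder\'on--Zygmund theory is needed: since $H$ is an isometry of $L^2(\bb C)$ and $\|\mu\|_\infty = k < 1$, the operator $M_\mu H$ has norm at most $k$ on $L^2$, so $h = \sum_{n\ge 0}(M_\mu H)^n\mu$ converges in $L^2(\bb C)$ to a solution, with every term supported in $\operatorname{supp}\mu \subset B(0,R)$. The main obstacle is to upgrade this to an $L^p$ bound for some $p>2$ \emph{without} invoking the $L^p$-boundedness of $H$ --- exactly the input this paper avoids --- and this is where the H\"older regularity of $\mu$ is used. Writing $h_n = (M_\mu H)^n\mu$, an induction based on Lemma~\ref{holderH} shows each $h_n$ is H\"older continuous with compact support and $\|h_n\|_{C^{0,\alpha}} \le \Lambda^n\|\mu\|_{C^{0,\alpha}}$, where $\Lambda = \|M_\mu H\|_{C^{0,\alpha}\to C^{0,\alpha}}$ is finite: Lemma~\ref{holderH} bounds the H\"older seminorm of $Hh_n$, and its sup norm is controlled using that $h_n$ vanishes on the boundary of its support. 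Combining this with $\|h_n\|_{L^2}\le k^n\|\mu\|_{L^2}$ via H\"older's inequality for functions gives $\|h_n\|_{L^p} \le \|h_n\|_{L^2}^{2/p}\|h_n\|_{L^\infty}^{1-2/p} \le \bigl(k^{2/p}\Lambda^{1-2/p}\bigr)^n\|\mu\|_{L^2}^{2/p}\|\mu\|_{C^{0,\alpha}}^{1-2/p}$, and since $k^{2/p}\Lambda^{1-2/p}\to k < 1$ as $p\to 2^{+}$, this converges in $L^p$ for every $p$ sufficiently close to $2$. Thus $h\in L^p(\bb C)$, $p>2$, is compactly supported and (the partial sums also converging in $L^2$, where $M_\mu H$ is bounded) solves $(\mathrm{Id} - M_\mu H)h = \mu$; alternatively the bootstrap can be run directly on the integral equation using the Riesz-potential estimate of Lemma~\ref{riesz}.

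It remains to see that $f = z + Th$ is a homeomorphism onto its image for $R$ small, which is where smallness of $R$ enters. Since $\|Th\|_{L^\infty(\bb C)}$ is controlled by $\|h\|_{L^p}$, hence ultimately by $|\operatorname{supp}\mu|$, taking $R$ small makes $\|Th\|_{L^\infty(\bb C)}$ small, so $f$ is a uniformly small perturbation of the identity on $\bb C$; in particular $f$ is proper and has degree one onto $\bb C$. As a non-constant $W^{1,2}_{\mathrm{loc}}$ solution of \eqref{beq} with $|\mu|\le k<1$, $f$ is quasiregular, hence open and discrete with positive local index, so a proper degree-one quasiregular self-map of $\bb C$, and therefore a homeomorphism (equivalently, by the Stoilow factorization $f = g\circ\Phi$ with $\Phi$ the homeomorphism of Theorem~\ref{mainthm} and $g$ entire with $g(w) = w + O(1)$ near $\infty$, forcing $g(w) = w + \mathrm{const}$). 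Restricting to $B(0,R)$ and appealing to invariance of domain then yields the desired homeomorphism onto an open image. The crux of the argument is the $L^2\!\to\!L^p$ bootstrap: it is precisely the place where the Calder\'on--Zygmund $L^p$-bound for $H$ is traded against the extra regularity of $\mu$, and getting the two geometric rates $k$ and $\Lambda$ to interpolate to a summable rate for some $p>2$ is the delicate point.
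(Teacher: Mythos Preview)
Your argument is correct but follows a different route from the paper. Both start from the same ansatz $f=z+T(\cdot)$ and the same Neumann series, but the paper makes the smallness of $R$ do all the work up front: after normalizing $\mu(0)=0$ one has $\|\mu\|_\infty\le AR^\alpha$, and Lemma~\ref{holderH} then shows that $g\mapsto H(\mu g)$ has $C^{0,\alpha}$-operator norm $O(R^\alpha)$, so for $R$ small the series $\sum (H\mu)^n$ converges \emph{directly in $C^{0,\alpha}$}. This delivers a H\"older $h$, hence $f\in C^1$ with $|f_z|=|1+H(\mu+\mu h)|\ge 1-CR^\alpha>0$, and the inverse function theorem finishes the homeomorphism claim --- no $L^2$ theory, no interpolation, no quasiregular machinery. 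Your $L^2$--$L^\infty$ interpolation buys convergence of the series for \emph{every} $R$, which is a genuine gain, but the price is that you only obtain $h\in L^p$ rather than $h$ H\"older; you then cannot run the Jacobian/IFT argument and must appeal to Reshetnyak-type openness/discreteness or to Sto\"ilow factorization --- the latter, as you note, via Theorem~\ref{mainthm}, which is somewhat circular in a section whose purpose is an independent, elementary local proof. In short: the paper's proof is more self-contained and elementary, yours is more general on the existence side but leans on deeper tools for the homeomorphism. If you are willing to use small $R$ anyway (as the statement allows), your own H\"older estimate on $M_\mu H$ already gives $\Lambda<1$, the series converges in $C^{0,\alpha}$, and the interpolation step becomes unnecessary.
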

\begin{proof}
Without loss of generality we may assume that $\mu(0) = 0$. 
Since $\mu$ is H\"older continuous, by Lemma \ref{holderH}, $H\mu$ is H\"older continuous as well.  Consider
\begin{align*}
h \coloneqq \sum_{n = 1}^\infty (H\mu)^n = H\mu + H\mu H\mu + \cdots.
\end{align*}
Then, for $z \in B(0,R)$,
\begin{align*}
|(H\mu)^n(z)| \le C_{\alpha,A}^nR^{\alpha n}.
\end{align*}
To see why this is true, take $f$ H\"older continuous with exponent $\alpha$ and constant $C_f$.  Then, by Lemma \ref{holderH},
\begin{align*}
|H\mu f(z_1) - H\mu f(z_2)| \le AR^\alpha C_\alpha C_f |z_1 - z_2|^\alpha.
\end{align*}
If $f = H\mu$, then
\begin{align*}
|H\mu f(z)| \le C_\alpha^2 A^2R^{2\alpha}.
\end{align*}
By induction we get the claim above.
So for $R$ sufficiently small, this is less than $1$ and the sum converges uniformly.  Thus, $h$ is also H\"older continuous.  

Note that $h$ satisfies
\begin{align*}
h - H(\mu h) = H\mu.
\end{align*}
Define
\begin{align*}
f(z) = z + T(\mu + \mu h).
\end{align*}
Since $\mu + \mu h$ is continuous, $\mu + \mu h \in L^p(D)$.  By property (3), $f \in C^1(D)$ and $1 +Hf = f_z $ is continuous.  In addition,
\begin{align*}
f_{\bar z} = \mu + \mu h = \mu(1+ h) = \mu(1 + H\mu + H(\mu h)) = \mu f_z.
\end{align*}
So $f$ solves \eqref{beq}.  To see that $f$ is injective, consider its Jacobian,
\begin{align*}
J_f = |f_z|^2 - |f_{\bar z}|^2 = |f_z|^2(1-|\mu|^2) \ge |f_z|^2(1-k^2).
\end{align*}
And
\begin{align*}
|f_z| = |1 + H(\mu + \mu h)| \ge 1 - C R^{\alpha}.
\end{align*}
So for small $R$, $|f_z| > 0$ and $f$ is locally injective.  By the inverse function theorem we can choose an $R$ so that $f$ is a homeomorphism onto its image.

\end{proof}

\nocite{*}


\begin{thebibliography}{99}

\bibitem{ahlfors} L. V. Ahlfors, \emph{Lectures on Quasiconformal Mappings,} 2nd Edition, American Mathematical Society, Providence, RI, 2006.

\bibitem{aim} K. Astala, T. Iwaniec, G. Martin, \emph{Elliptic partial differential equations and quasiconformal mappings in the plane,} Princeton University Press, Princeton, NJ, 2009.

\bibitem{bojarski} B. V. Bojarski, \emph{Generalized solutions of a system of differential equations of the first order and elliptic type with discontinuous coefficients}, University of Jyv\"askyl\"a, Jyv\"askyl\"a, 2009.

\bibitem{chern} S.S. Chern, An elementary proof of the existence of isothermal parameters on a surface, \emph{Proc. Amer. Math. Soc.}, 6, (1955), 771--782.

\bibitem{dittmar} B. Dittmar, Ein neuer {E}xistenzbeweis f\"ur quasikonforme {A}bbildungen mit vorgegebener komplexer {D}ilatation, \emph{{P}roc. {S}eventh {C}onf.}, {K}ozubnik, ({K}ozubnik, 1979), Springer, Berlin, 1980.

\bibitem{donaldson} S. K. Donaldson, D. P. Sullivan, Quasiconformal {$4$}-manifolds, \emph{Acta Math.}, 163, (1989), 181--252.

\bibitem{trudinger} D. Gilbarg, N. S. Trudinger, \emph{Elliptic partial differential equations of second order,} 2nd Edition, Springer-Verlag, Berlin, 1998.

\bibitem{glutsyuk} A. A. Glutsyuk, Simple proofs of uniformization theorems, \emph{Fields Inst. Commun.}, 53, Amer. Math. Soc.,(2008), 125--143.

\bibitem{heinonen} J. Heinonen, \emph{Lectures on analysis on metric spaces}, Springer, New York, 2001.

\bibitem{hellwig} G. Hellwig, \emph{Partial differential equations. An introduction}, Second Edition, B. G. Teubner, Stuttgart, 1977.

\bibitem{hubbard} J. H. Hubbard, \emph{Teichm\"uller theory and applications to geometry, topology, and dynamics. {V}ol. 1}, Matrix Editions, Ithaca, NY, 2006.

\bibitem{iwaniecmartin} T. Iwaniec, G. Martin, \emph{Quasiregular mappings in even dimensions}, Acta Math., 170, (1993), 29--81.

\bibitem{ligocka} E. Ligocka,On {D}ittmar's approach to the {B}eltrami equation, \emph{Colloq. Math.}, 92, (2002),189--195.

\bibitem{morrey} C. B. Morrey, On the solutions of quasi-linear elliptic partial differential equations, \emph{Trans. Amer. Math. Soc.}, 43, (1938), 126--166.
              
\bibitem{newlander} A. Newlander, L. Nirenberg, Complex analytic coordinates in almost complex manifolds, \emph{Ann. of Math.} (2), 65, (1957), 291--404.

\bibitem{schauder} J. Schauder, \"{U}ber lineare elliptische {D}ifferentialgleichungen zweiter {O}rdnung, \emph{Math. Z.}, 38, (1934), 257--282.

\bibitem{schouten} J. A. Schouten, \emph{Ricci-calculus. {A}n introduction to tensor analysis and its geometrical applications,} Second Edition, Springer, Berlin, 1954.

\bibitem{stein} E. M. Stein, \emph{Singular integrals and differentiability properties of
functions}, Princeton University Press, Princeton, NJ, 1970.
              
\bibitem{taylor} M. E. Taylor, \emph{Partial differential equations {I}. {B}asic theory,} 2nd Edition, Springer, New York, 2011.






\end{thebibliography}
\end{document}